\pdfoutput=1
\documentclass{article}
	\usepackage[14pt]{extsizes}
	\usepackage[utf8]{inputenc}
	\usepackage{amsfonts}
	\usepackage{parskip}
	\usepackage{cmap}
	\usepackage{cite,enumerate,float,indentfirst}
	\usepackage[russian]{babel}
	\usepackage{setspace,amsmath}
	\usepackage{amsthm,amssymb}
	\usepackage{hyperref}
	\usepackage[left=20mm, top=15mm, right=15mm, bottom=15mm, nohead, footskip=10mm]{geometry}
	\parskip=0pt
	\renewcommand{\Im}{\mathrm{Im}\,}
	\renewcommand{\Re}{\mathrm{Re}\,}
	
	\newcommand{\conj}[1]{\overline{#1}}
	\theoremstyle{plain}
	\newtheorem{proposition}{Утверждение}
	\newtheorem{definition}{Определение}
	\newtheorem{theorem}{Теорема}
	\newtheorem{lemma}{Лемма}
	\begin{document}
		\begin{titlepage}
			\vspace{0.5cm}
			\begin{center}
				\Large{\textbf{Однородные CR-многообразия в $\mathbb{C}^4$}}
			\end{center}
				\vspace{1.3cm}
			\begin{abstract}
				In this paper we study holomorphically homogeneous model submanifolds CR-type (1, 3) complex space $\mathbb C^4$. One finds moduli space of five-dimensional model surfaces Bloom-Graham type ((2, 1), (3, 1), (4, 1)). It is shown that there exists unique model surface of this type with property of holomorphical homogeneous, which is equivalent to tube surface $\mathcal C$ with affin homogeneous base. One describes and classifies with respect to model surfaces the orbits relative to the group of holomorhical automorphisms of $\mathcal C$
				
				-----
				
				В работе исследуются однородные модельные подмногообразия CR-типа (1, 3) комплексного пространства $\mathbb C^4$. Найдено пространство модулей пятимерных модельных поверхностей типа ((2, 1), (3, 1), (4, 1)) по Блуму~"--- Грэму. Показано, что среди модельных поверхностей этого типа свойством голоморфной однородности обладает ровно одна поверхность, которая эквивалентна трубчатой поверхности $\mathcal {C}$ над аффинно-однородной кривой в $\mathbb R ^4$. Далее описано семейство шестимерных голоморфно-однородных поверхностей, получающихся как орбиты действия группы голоморфных автоморфизмов $\mathcal C$ и проведена их классификация с точки зрения соответствующих модельных поверхностей.
			\end{abstract}
			\vspace{1cm}

		\end{titlepage}
	\section{Введение}
	\qquad Интерес к голоморфно-однородным поверхностям восходит к Картану, который классифицировал локально однородные вещественные аналитические трехмерные поверхности в $\mathbb{C}^2$ \cite{Cartan}. Позднее классификация однородных поверхностей проводилась для четырехмерных поверхностей \cite{BeloshapkaKossovskiyClassification} и пятимерных гиперповерхностей \cite{LobodaHomogeneous}. В этой работе будет показано, что среди порождающих пятимерных однородных модельных поверхностей в $\mathbb{C}^4$ конечного типа по Блуму~"--- Грэму возможны только вполне невырожденные и $$\mathcal C = \begin{cases}
	x_2 = x_1^2,\\
	x_3 = x_1^3,\\
	x_4 = x_1^4,\\
	x_1, y_1, y_2, y_3, y_4 \in \mathbb{R},
\end{cases}$$ 
которая является трубчатой поверхностью над аффинно-однородной кривой в $\mathbb R^4$.
\smallskip

\qquad Напомним определение типа по Блуму~"--- Грэму. Рассмотрим росток $M_\xi$ порождающего CR-подмногообразия $M$ в точке $\xi\in\mathbb{C}^{n+k}$. В точке $\xi$ имеется комплексное касательное пространство $T^c_\xi M_\xi := T_\xi M_\xi \cap i T_\xi M_\xi \subset T_\xi M_\xi.$ Пусть 
\[\dim_\mathbb C T^c_\xi M_\xi = n,\;\mathrm{codim}\,_\mathbb R T_\xi M_\xi = k,\]
то есть $M_\xi$ имеет CR-тип $(n, k)$. Определим последовательность распределений: 
\[D_1 = T^c M,\]
\[D_{\nu+1} = D_\nu + [D_{\nu}, D_1],\; \nu = 1, 2, \ldots.\]
Для любого $\nu,$ распределение $D_\nu$ в точке $\xi$ задает подпространство $D_\nu (\xi)\subset T_\xi M$. Более того, набор $D_\nu(\xi)$ образует неубывающую систему: $${{D_{\nu}(\xi) \subset D_{\nu+1}(\xi)}},$$ следовательно, последовательность стабилизируется. Определим последовательность пар $(m_\nu, k_\nu),$ где $m_\nu$ -- $\nu$-ый номер пространства $D_{m_\nu}(\xi)$, для которого произошел скачок размерности, а $$k_\nu = \dim_\mathbb{R} D_{m_\nu}(\xi) - \dim_\mathbb{R} D_{m_\nu -1}(\xi)$$ -- величина скачка. 
\begin{definition}
	Типом по Блуму~"--- Грэму ростка $M_\xi$ подмногообразия в точке $\xi$ назовем $$m = ((m_1, k_1), (m_2, k_2),\ldots,(m_\nu, k_\nu), \ldots).$$ Тип в точке $\xi$ бесконечный по Блуму~"--- Грэму, если последовательность $D_\nu (\xi)$ стабилизируется на собственном касательном подпространстве, иначе~"--- конечным.
\end{definition}
\qquad Таким образом, для конечного типа: $\sum_\nu k_\nu = k$. Заметим, что при этом существует бесконечно много различных конечных типов по Блуму~"--- Грэму, однако в дальнейшем нас будет интересовать среди них те, чьи модельные поверхности голоморфно-однородны.
\begin{definition}
	Модельная поверхность $Q$ -- невырождена в точке $\xi\in Q$, если она голоморфно невырождена и имеет в точке $\xi$ конечный тип по Блуму~"--- Грэму.
\end{definition}
\begin{definition}
	CR-многообразие $M$ -- невырождено в точке $\xi\in M$, если его модельная поверхность в точке $\xi$ -- невырождена в соответствующей точке.
\end{definition}
\qquad Для невырожденного голоморфно-однородного многообразия тип по Блуму~"--- Грэму всюду постоянен. Кроме того, в \cite{FiniteBlGr} показано, что для однородных модельных поверхностей на каждом шаге происходит рост размерности $D_{\nu}(\xi)$. Поэтому для подмногообразий в $\mathbb{C}^4$ коразмерности $3$ с голоморфно-однородной модельной поверхностью возможно только 2 типа по Блуму~"--- Грэму:
\begin{itemize}
	\item ((2, 1), (3, 2))
	\item ((2, 1), (3, 1), (4, 1)).
\end{itemize}
\smallskip
Первый тип соответствует ростку вполне невырожденной поверхности. В некоторых координатах его модельная поверхность имеет вид:
\[\begin{cases}
	\Im w_2 = |z|^2,\\
	\Im w_3 = 2\Re z^2\conj z,\\
	\Im w_4 = 2\Im z^2\conj z,
\end{cases}\]
и известно, что она является голоморфно-однородной~\cite{UniversalModel}. Отметим, что в этом случае модельная поверхность единственна. В то же время, как будет показано ниже, для другого типа это не так. В дальнейшем будем рассматривать второй тип.

\section{Нормальная форма ростка и пространство модулей модельных поверхностей}

Рассмотрим росток $M_\xi$ типа $((2, 1), (3, 1), (4, 1))$ по Блуму~"--- Грэму. Введем градуировку:
\[ [z] = [\conj z] = 1,\: [u_2] = 2,\: [u_3] = 3,\: [u_4] = 4.\]
Согласно \cite{BloomGraham}, общий вид $M_\xi$ в некоторых координатах задается системой:
\[
\begin{cases}
	\Im w_2 = a z \conj{z} + \ldots,\\
	\Im w_3 = 2 \Re \alpha z^2 \conj{z} + \ldots,\\
	\Im w_4 = b |z|^4 + 2 \Re \beta z^3 \conj{z} + 2 \Re \gamma z^4 + 2 \Re \delta z^2 u_2 +\\ \qquad\qquad\qquad\qquad\qquad+ c |z|^2 u_2 + 2 \Re \chi z u_3 + d u_2^2 + \ldots ,
\end{cases}
\]
где $a, \alpha \ne 0, \, (b, c, d, \beta,\gamma, \delta, \chi) \ne 0,\, (a, b, c, d)\in\mathbb R^4,\, (\alpha, \beta,\gamma,\delta,\chi)\in\mathbb C^5$, а многоточие означает члены большего веса относительно введенной градуировки. При $\alpha = r^3 \exp(i \varphi)$ заменой координат: $z \to z\exp(-i \varphi)/r,\, w_2 \to w_2 a/r^2,\, w_3 \to w_3,\, {w_4 \to w_4}$, можно привести систему к виду:
\[
\begin{cases}
	\Im w_2 = z \conj{z} + \ldots,\\
	\Im w_3 = 2 \Re (z^2 \conj{z}) + \ldots,\\
	\Im w_4 = \tilde b |z|^4 + 2 \Re (\tilde\beta z^3 \conj{z}) + 2 \Re (\tilde\gamma z^4) +\\ \qquad\qquad+ 2 \Re (\tilde\delta z^2 u_2) + \tilde c |z|^2 u_2 + 2 \Re (\tilde\chi z u_3) + \tilde d u_2^2 + \ldots
\end{cases}
\]

Далее делаем замену: $z \to z,\, w_2 \to w_2,\, w_3 \to w_3,\, w_4 \to w_4 + \tilde\gamma z^4 + \tilde \delta z^2 w_2 + \tilde \chi z w_3 +( \tilde d/2 - i\tilde c/4 ) w_2^2 $.
Итого, вводя новые обозначения для коэффициентов при $z^4$ и $z^3\conj z$, мы получаем, что произвольный росток типа $((2, 1), (3, 1), (4, 1))$ в некоторой системе координат имеет вид:
\[
\begin{cases}
	\Im w_1 = z\conj z + \ldots,\\
	\Im w_2 = 2 \Re z^2\conj z + \ldots,\\
	\Im w_3 = a |z|^4 + 2 b\, \Re z^3 \conj z  - 2 c \Im z^3 \conj z + \ldots,
\end{cases}
\]
Причем $(a, b, c) \ne 0$.

Далее мы будем рассматривать поверхность $\mathcal{Q}_{a, b, c}$:
\[
\begin{cases}
	\Im w_2 = z\conj z,\\
	\Im w_3 = 2 \Re z^2\conj z,\\
	\Im w_4 = a |z|^4 + 2b \,\Re z^3 \conj z - 2c \,\Im z^3\conj z,
\end{cases}
\]
\quad Поверхность $\mathcal Q_{a,b,c}$, как и всякая модельная поверхность~\cite{FiniteBlGr}, обладает следующим свойством. Рассмотрим $\mathrm{aut}\, M_\xi$ алгебру Ли ростков вещественных векторных полей с голоморфными коэффициентами в точке $\xi$, касательных к ростку $M_\xi$. В координатах $(z,w) = (z, w_2, w_3, w_4)\in \mathbb C^4$:
\[\mathrm{aut}\, M_\xi = \left\{X = 2\Re\left(f(z, w)\frac{\partial}{\partial z} + g_2(z, w)\frac{\partial}{\partial w_2} + g_3(z, w)\frac{\partial}{\partial w_3} + g_4(z, w)\frac{\partial}{\partial w_4}\right)\right\},\]
где $f, g_2, g_3, g_4$ -- ростки функций, голоморфных в точке $\xi$, $X = X(z, w)$ -- росток касательного поля к $M_\xi$.  Тогда для любой $Q = Q_{a,b,c}$ размерность $\mathrm{aut}\, Q_\xi$ мажорирует размерность $\mathrm{aut}\, M_\xi$, если $M_\xi$ имеет тип по Блуму~"--- Грэму $((2, 1), (3, 1), (4, 1))$ в точке $\xi$, то есть справедливо равенство:
\[\dim \mathrm{aut}\, M_\xi \leqslant \dim\mathrm{aut}\, Q_\xi.\] 	 
Займемся вопросом о попарной эквивалентности $Q_{a, b, c}$.
\begin{proposition}
	Биголоморфизм между двумя поверхностями $\mathcal{Q}_{a, b, c}$ и $\mathcal{Q}_{a'\!,\, b'\!,\, c'}$  существует тогда и только тогда, когда  $(a: b: c) = (a': b': c')$. Иными словами, пространством модулей модельных поверхностей типа $((2, 1), (3, 1), (4, 1))$ по Блуму~"--- Грэму является $\mathbb{R}P^2$.
\end{proposition}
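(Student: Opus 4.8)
План состоит в том, чтобы свести произвольный биголоморфизм к \emph{весово-однородному} (сохраняющему градуировку $[z]=1,\ [w_2]=2,\ [w_3]=3,\ [w_4]=4$) отображению, а затем явно выписать все такие отображения и проследить их действие на тройку $(a,b,c)$. Обратное включение тривиально, так что основная работа приходится на прямое.

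Первый и наиболее тонкий шаг — редукция к градуированным отображениям. Я бы воспользовался тем, что каждая модельная поверхность $\mathcal Q_{a,b,c}$ инвариантна относительно весовых растяжений $\delta_t\colon (z,w_2,w_3,w_4)\mapsto(tz,t^2w_2,t^3w_3,t^4w_4)$. Если $\Phi$ — биголоморфизм между $\mathcal Q_{a,b,c}$ и $\mathcal Q_{a'\!,\,b'\!,\,c'}$, сохраняющий начало координат, то семейство $\delta_t^{-1}\circ\Phi\circ\delta_t$ при $t\to 0$ стремится к весово-однородной части $\Phi_0$ (главной части отображения $\Phi$ относительно введённой градуировки), которая ввиду $\delta_t$-инвариантности обеих поверхностей сама является биголоморфизмом между ними. Главная трудность здесь — аккуратно обосновать существование и невырожденность этого предела, опираясь на свойства модельных поверхностей из \cite{FiniteBlGr}; после этого достаточно классифицировать лишь весово-однородные биголоморфизмы.

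Всякое такое отображение имеет треугольный вид $z\mapsto p z$, $w_2\mapsto q\,w_2+s\,z^2$, $w_3\mapsto A\,w_3+B\,z w_2+C\,z^3$, $w_4\mapsto s_1 w_4+s_2\,z w_3+s_3\,z^2 w_2+s_4\,w_2^2+s_5\,z^4$. Далее я бы последовательно подставлял определяющие уравнения поверхности (сначала веса $2$, потом $3$, потом $4$) и приравнивал левую и правую части. Сохранение уравнения $\Im w_2=z\conj z$ даёт $s=0$ и $q=|p|^2\in\mathbb R$; сохранение уравнения $\Im w_3=2\Re z^2\conj z$ вынуждает $p$ быть вещественным, $B=C=0$, $A=p^3$; наконец, уравнение четвёртого веса обнуляет все коэффициенты $s_2,\dots,s_5$, оставляет $s_1\in\mathbb R$ и приводит к соотношению $(a',b',c')=(s_1/p^4)\,(a,b,c)$. Технический механизм этих выкладок — разделение однородных мономов на делящиеся на $z\conj z$ (а именно $|z|^4,\ z^3\conj z,\ z\conj z^3$) и гармонические ($z^4,\conj z^4$) вместе с отслеживанием коэффициентов при свободных вещественных переменных $u_2,u_3,u_4$; именно это механически уничтожает все лишние коэффициенты.

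Таким образом, тройки $(a,b,c)$ и $(a',b',c')$ пропорциональны с вещественным множителем $s_1/p^4\ne 0$, то есть $(a:b:c)=(a':b':c')$. Обратно, для любого вещественного $\lambda\ne 0$ отображение $z\mapsto z,\ w_2\mapsto w_2,\ w_3\mapsto w_3,\ w_4\mapsto\lambda w_4$ переводит $\mathcal Q_{a,b,c}$ в $\mathcal Q_{\lambda a,\,\lambda b,\,\lambda c}$, реализуя любую проективную эквивалентность. Поскольку $(a,b,c)\ne 0$, каждому классу отвечает точка $(a:b:c)\in\mathbb{R}P^2$, и все точки достигаются; значит, пространством модулей служит $\mathbb{R}P^2$.
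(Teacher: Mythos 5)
Ваше предложение по существу воспроизводит доказательство статьи в его вычислительной части: ограничение на треугольные весово-однородные отображения, подстановка в определяющие уравнения, последовательное обнуление лишних коэффициентов (разделение на мономы, делящиеся на $z\conj z$, гармонические мономы и члены со свободными переменными $u_2, u_3, u_4$) и итоговое соотношение $(a'\!,\,b'\!,\,c') = (s_1/p^4)(a,b,c)$ --- это в точности та же выкладка, что в статье, где допустимое отображение имеет вид $z\to\beta z$, $w_2\to\beta_2 w_2$, $w_3\to\beta_3 w_3$, $w_4\to\beta_4 w_4+\gamma_4 w_2^2$. Различие только в обосновании редукции к таким отображениям: статья напрямую цитирует структурную теорему из \cite{FiniteBlGr} о виде биголоморфизмов между модельными поверхностями, тогда как вы пытаетесь вывести её самостоятельно через предел $\delta_t^{-1}\circ\Phi\circ\delta_t$ при $t\to 0$. Ваш путь более самодостаточен и в принципе проходит, но это не <<более элементарный>> аргумент --- это как раз воспроизведение доказательства цитируемой теоремы.

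При этом в вашем тексте остаются два незакрытых места. Во-первых, вы без обоснования предполагаете, что биголоморфизм сохраняет начало координат: это требует отдельного рассуждения (инвариантность типа по Блуму~"--- Грэму гарантирует, что образ начала координат лежит в страте точек полного типа, после чего вещественные сдвиги $w_j\to w_j+q_j$, являющиеся автоморфизмами обеих поверхностей, позволяют нормализовать образ). Во-вторых --- и вы сами называете это <<главной трудностью>> --- существование предела $\delta_t^{-1}\circ\Phi\circ\delta_t$ не очевидно: если компонента $g_j$ содержит мономы веса меньше $j$, сопряжение растяжениями расходится. Этот пробел закрывается поуровневым разбором уравнений касания: например, компонента веса 1 уравнения $\Im g_2 = |f|^2$ на поверхности даёт $\Im(cz)=0$ для всех $z$, откуда $c=0$; аналогично уничтожаются члены весов $1,2$ в $g_3$ и весов $1,2,3$ в $g_4$, а невырожденность предельного отображения следует из блочно-треугольной структуры якобиана $d\Phi(0)$. Поскольку оба пробела закрываются ровно тем материалом, который содержится в теореме из \cite{FiniteBlGr}, итог такой: либо доведите эти два рассуждения до конца, либо, как сделано в статье, сошлитесь на готовый результат --- в текущем виде доказательство неполно, хотя стратегия верна.
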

\begin{proof}
	Очевидно, что если мы умножим тройку $(a, b, c)$ на ненулевое вещественное число, то соответствующая поверхность будет биголоморфно эквивалентна исходной. Введем на пространстве параметров следующее отношение эквивалентности:
	\[(a, b, c)\sim (a'\!,\, b'\!,\, c') \Leftrightarrow \exists \lambda \in \mathbb{R}\setminus \{0\} : \lambda(a, b, c) = (a'\!,\, b'\!,\, c').\]
	
	\quad Такое отношение не склеивает различные (с точностью до биголоморфизма) поверхности. Покажем, что различным точкам фактормножества параметров (то есть точкам из $\mathbb R P^2$) соответствуют неэквивалентные поверхности.

	\quad Пусть $(a  : b  : c) \ne (a'  : b': c')$, обозначим первую поверхность буквой $\mathcal{Q}$, а вторую - $\mathcal{Q}'$. Тогда $\mathcal Q$ биголоморфно эквивалентна $\mathcal Q '$ тогда и только тогда, когда они эквивалентны посредством отображения :
	\[
	\begin{cases}
		z \to \beta z,\\
		w_2 \to \beta_2 w_2,\\
		w_3 \to \beta_3 w_3,\\
		w_4 \to \beta_4 w_4 + \gamma_4 w_2^2,
	\end{cases}
	\]
	согласно \cite{FiniteBlGr}. Выпишем условие эквивалентности:

	\[
	\begin{cases}
		\Im \beta_2 w_2 = |\beta z|^2,\\
		\Im \beta_3 w_3 = 2\Re (\beta z)^2\conj{\beta z},\\
		\Im (\beta_4 w_4 + \gamma_4 w_2^2) = a'|\beta z|^4 + 2b'\Re(\beta z)^3\conj{\beta z} + 2c'\Im(\beta z)^3\conj{\beta z},
	\end{cases}
	\]
	при $w_2 = u_2 + i|z|^2, w_3 = u_3 + 2 i \Re z^2\conj z, w_4 = u_4 + i(a|z|^4 + 2b\, \Re z^3\conj z - 2c \, \Im z^3\conj z) $.
	Из первых двух уравнений получаем: $\Im\beta = \Im \beta_2 = \Im\beta_3 = 0,\, \Re\beta_2 = |\beta|^2,\, \Re\beta_3 = |\beta|^3$. Из последнего уравнения видно, что $\Im\beta_4 = \gamma_4 = 0$ и
	\[
	\begin{cases}
		a\beta_4 = a'\beta^4,\\
		b\beta_4 = b'\beta^4,\\
		c\beta_4 = c'\beta^4.
	\end{cases}
	\]
	Последняя система в случае $\beta_4 \ne 0$, равносильна $(a, b, c) = \beta^4/\beta_4(a'\!,\, b'\!,\, c')$, что противоречит условию $(a : b : c) \ne (a': b':c')$.
\end{proof}	
	\section{Голоморфно-однородные модельные поверхности}

Теперь найдем среди $\mathcal{Q}_{a,b,c}$ все голоморфно-однородные поверхности.
\begin{proposition}
	Любая голоморфно-однородная модельная поверхность $\mathcal{Q}_{a,b,c}$ биголоморфна $\mathcal{C}$.
\end{proposition}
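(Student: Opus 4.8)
The plan is to extract holomorphic homogeneity directly from the graded Lie algebra $\mathrm{aut}\,\mathcal Q_{a,b,c}$, writing $\zeta_j = x_j + i y_j$ for the coordinates on $\mathbb C^4$. Since $\mathcal Q_{a,b,c}$ is rigid — its right-hand sides $\Phi_2 = z\conj z$, $\Phi_3 = 2\Re z^2\conj z$, $\Phi_4 = a|z|^4 + 2b\Re z^3\conj z - 2c\Im z^3\conj z$ depend only on $z,\conj z$ — the real translations $u_j \to u_j + t_j$ are automorphisms and already give transitivity in the three $\Re w_j$-directions. Homogeneity is therefore equivalent to the existence of infinitesimal automorphisms moving the origin in the two-dimensional $z$-direction. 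Because $\mathrm{aut}\,\mathcal Q_{a,b,c}$ is graded for $[z]=1,\,[w_j]=j$ (as for any model surface, see \cite{FiniteBlGr}) and every field of nonnegative weight vanishes at the origin, the value $X(0)$ of any infinitesimal automorphism is controlled by its weight $-1$ component. So I first reduce the whole question to the weight $-1$ subspace.

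Next I would write the general weight $-1$ field
\[X = 2\Re\Bigl(p\,\partial_z + a_1 z\,\partial_{w_2} + (b_1 z^2 + b_2 w_2)\,\partial_{w_3} + (c_1 z^3 + c_2 z w_2 + c_3 w_3)\,\partial_{w_4}\Bigr),\]
with $p,a_1,b_1,b_2,c_1,c_2,c_3\in\mathbb C$, and impose tangency in the form $\Im g_j = 2\Re\bigl(p\,\partial_z\Phi_j\bigr)$ restricted to $\mathcal Q_{a,b,c}$. The equations for $j=2,3$ fix $a_1,b_1,b_2$ in terms of $p$ with no obstruction. The decisive equation is $j=4$: substituting $w_2 = u_2 + i|z|^2$, $w_3 = u_3 + 2i\Re z^2\conj z$ and matching coefficients of the independent monomials forces $c_2=0$, $c_3\in\mathbb R$, and leaves two determinations of $c_3$ (from the $z^2\conj z$ and $z\conj z^2$ coefficients) that are complex conjugate. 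Their compatibility is the single real condition
\[\Im\Bigl(3(b+ic)\,p + 2a\,\conj p\Bigr) = 0, \qquad\text{i.e.}\qquad 3c\,\Re p + (3b - 2a)\,\Im p = 0.\]
Since $X(0) = 2\Re(p\,\partial_z)$, the surface is homogeneous exactly when this holds for every $p\in\mathbb C$, which happens if and only if $c=0$ and $2a=3b$, that is $(a:b:c) = (3:2:0)$.

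It then remains to identify this unique point of the moduli space $\mathbb R P^2$ with $\mathcal C$. For this I would bring the tube to graded normal form: setting $z = \zeta_1$ and $w_j = -i\zeta_j + (\text{holomorphic in }z)$, the identities $(\Re\zeta_1)^k = \bigl(\tfrac{z+\conj z}{2}\bigr)^k$ show that after removing the pure powers $z^k$ one gets
\[\Im w_2 = \tfrac12|z|^2,\qquad \Im w_3 = \tfrac34\Re z^2\conj z,\qquad \Im w_4 = \tfrac38|z|^4 + \tfrac12\Re z^3\conj z,\]
with no $\Im z^3\conj z$ term and no lower-weight corrections, so $\mathcal C$ is itself a model surface of type $((2,1),(3,1),(4,1))$. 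A real rescaling normalizing the first two equations carries the third to $(a:b:c) = \bigl(\tfrac38:\tfrac14:0\bigr) = (3:2:0)$. Hence $\mathcal C \cong \mathcal Q_{3,2,0}$, and by the previous computation every holomorphically homogeneous $\mathcal Q_{a,b,c}$ shares the invariant $(3:2:0)$; Proposition~1 then yields the asserted biholomorphic equivalence with $\mathcal C$.

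The main obstacle is the weight $-1$ tangency computation for the fourth equation: all the arithmetic producing the homogeneity condition lives there, and one must take care that the reduction to weight $-1$ really captures the full orbit, which rests on the gradedness of $\mathrm{aut}\,\mathcal Q_{a,b,c}$ and on the vanishing at the origin of its nonnegative-weight part. The identification $\mathcal C \cong \mathcal Q_{3,2,0}$ is then a routine normal-form calculation, kept clean by the fact that the monomial tube produces no higher-order terms.
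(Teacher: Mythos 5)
Your proof is correct, but it takes a genuinely different route from the paper's. The paper invokes the criterion from \cite{FiniteBlGr} that a model surface is holomorphically homogeneous if and only if its Bloom--Graham type is constant, translates an arbitrary point $(z^0,w^0)\in\mathcal Q_{a,b,c}$ to the origin, renormalizes the first two equations, and observes that the last equation then acquires the non-harmonic, non-removable cubic term $\Re\bigl(z^2\conj z\,(2a\conj{z^0}+3bz^0)\bigr)-3c\,\Im\bigl(z^2\conj z\,z^0\bigr)$; constancy of type forces $c=0$ and $2a\conj{z^0}+3bz^0\in\mathbb R$ for all $z^0$, i.e. $(a:b:c)=(3:2:0)$. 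You instead argue infinitesimally: rigidity supplies the $u_j$-translations, gradedness of $\mathrm{aut}\,\mathcal Q_{a,b,c}$ reduces transitivity at $0$ to the weight $-1$ component, and the tangency condition for the fourth equation produces $\Im\bigl(3(b+ic)p+2a\conj p\bigr)=0$ --- which is literally the paper's obstruction with $p$ in place of $z^0$, so the two computations are dual to each other (your arithmetic checks out: $a_1=2i\conj p$, $b_1=2i\conj p$, $b_2=4\Re p$, then $c_2=0$ and $c_3\in\mathbb R$ from the $u_2$- and $u_3$-coefficients, with $c_3=2a\conj p+3(b+ic)p$ forced to be real). Your route buys something extra: for $(a:b:c)=(3:2:0)$ the weight $-1$ fields you construct are precisely the $p$-parameter part of the automorphism group described later in Proposition~3, so that description comes almost for free; the paper's route needs less machinery, relying only on the type-constancy criterion. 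The one step you should make explicit is the implication ``homogeneous $\Rightarrow$ the evaluation map of $\mathrm{aut}$ at $0$ is surjective'': this uses the fact that automorphisms of a nondegenerate model surface form a finite-dimensional Lie group of polynomial maps (so transitivity can be differentiated), which, like the grading, is in \cite{FiniteBlGr} but is doing real work in your reduction. Your closing identification of $\mathcal C$ with $\mathcal Q_{3,2,0}$ via the normal form $\Im w_2=\tfrac12|z|^2$, $\Im w_3=\tfrac34\Re z^2\conj z$, $\Im w_4=\tfrac38|z|^4+\tfrac12\Re z^3\conj z$ (with the ratio $(a:b:c)$ unchanged by the real rescaling) is correct and plays the same role as the paper's explicit polynomial-triangular transformation.
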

\begin{proof}
	Голоморфная однородность означает, что для любой точки поверхности $\mathcal{Q}_{a,b,c}$ существует голоморфный автоморфизм, переводящий начало координат в эту точку. В \cite{FiniteBlGr} показано, что необходимым и достаточным условием голоморфной однородности модельной поверхности является постоянство Блум~"--- Грэм типа. Посмотрим на Блум~"--- Грэм тип в окрестности точки $(z^0, w^0) \in \mathcal{Q}_{a,b,c}$:
	\[\begin{cases}
		\Im w_2 + \Im w_2^0 = |z|^2 + 2\Re z \conj {z^0} + |z^0|^2,\\
		\Im w_3 + \Im w_3^0 = 2\Re z^2 \conj z + 2|z|^2\Re z^0 + 2\Re z^2\conj {z^0} +\\\qquad\qquad\qquad+ 2\Re z(\conj {z^0})^2 + 2|z^0|^2\Re z + 2\Re (z^0)^2\conj {z^0},\\
		\Im w_4 + \Im w_4^0 = a|z+z^0|^4 + 2b \,\Re((z+z^0)^3\conj{(z+z^0)})-2c \,\Im((z+z^0)^3\conj{(z+z^0)}).
	\end{cases} \]
	Сделаем замену:
	\[
	\begin{cases}
		\tilde w_2 = w_2 - 2i z\conj {z^0},\\
		\tilde w_3 = w_3 - 2i\tilde w_2 \Re z^0 - 2i z^2 \conj {z^0} - 2i z \conj {(z^0)}^2 - 2 i z |z^0|^2,\\
	\end{cases}
	\]
	и тогда первые два уравнения примут вид:
	\[
	\begin{cases}
		\Im \tilde w_2 = |z|^2,\\
		\Im \tilde w_3 = 2\Re z^2\conj z.\\
	\end{cases}
	\]
	А вот в третьем уравнении возникает слагаемое $\Re (z^2\conj z (2a\conj {z^0} + 3 b z^0)) - 3c \,\Im (z^2\conj z z^0)$. Оно не является гармоническим для произвольных $a, b, c$. Также нетрудно понять, что от него не получиться избавиться за счет мономов, содержащих $w_2$. Значит, тип будет постоянным для любого комплексного $z^0$ в том и только в том случае, если: $2a\conj z^0 + 3b z^0 \in\mathbb{R}, \forall z^0\in\mathbb{C}$ и $c =0 $. Или эквивалентно:
	
	\[\begin{cases}
		c=0,\\
		- 2 a + 3 b = 2 a - 3 b,
	\end{cases}\]
	откуда $(a:b: c) = (3:2:0)$.
	\smallskip
	
	Заметим, что поверхность $\mathcal{C}$ полиномиально-треугольным преобразованием
	\[\begin{cases}
		z = x_1 + i y_1 \to 2 z,\\
		w_2 = x_2 + i y_2\to 2 w_2 + 2 z^2,\\
		w_3 = x_3 + i y_3\to 3 w_3 + 2 z^3,\\
		w_4 = x_4 + i y_4\to 2 w_4 + 2 z^4,
	\end{cases}\]
	приводится к виду $\mathcal Q_{3,2,0}$.
\end{proof}
В дальнейшем потребуется знание того, как устроена $\mathrm{Aut}\, \mathcal C$, поэтому сформулируем
\begin{proposition}
	Группа голоморфных автоморфизмов $\mathcal{C}$ 6-мерна и параметризуется 4 вещественными параметрами и 1 комплексным:
	\[\begin{cases}
		z \to \lambda z + p,\\
		w_2 \to \lambda^2 w_2 + 2 i \lambda z \conj p + i |p|^2 + q_2,\\
		w_3 \to \lambda^3 w_3 + 4 \lambda^2 w_2 \Re p + 2 i \lambda ^2 z^2 \conj p + 2 i \lambda a (\conj p ^2 + 2|p|^2) + 2i p^2\conj p + q_3,\\
		w_4 \to \lambda^4 w_4 + 12 \lambda^3 w_3 \Re p + 12 \lambda^2 w_2 \Re{(p^2 + |p|^2)} + 4i \lambda^3 z^3 \conj p + 6i \lambda^2 z^2 (\conj p ^2 + 2|p|^2) +\\ \qquad\qquad\qquad + 4i \lambda z (6\Re{p\conj p ^2 + \conj p ^3}) + 3i |p|^4 + 4i \Re p^3\conj p + q_4,\\
		\lambda \in \mathbb{R}\setminus\{0\}, q_2, q_3, q_4 \in \mathbb{R}, p\in \mathbb{C}.
	\end{cases}\]
\end{proposition}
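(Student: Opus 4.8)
The plan is to compute the Lie algebra $\mathrm{aut}\,\mathcal{C}$ of infinitesimal automorphisms explicitly and then integrate its generators to recover the group. As in the proof of Proposition~1, I would first invoke the structural result of \cite{FiniteBlGr}: every automorphism of a model surface of finite Bloom--Graham type respects the weighted grading $[z]=1$, $[w_2]=2$, $[w_3]=3$, $[w_4]=4$. Hence a tangent holomorphic field $X=2\Re\bigl(f\partial_z+g_2\partial_{w_2}+g_3\partial_{w_3}+g_4\partial_{w_4}\bigr)$ has graded components whose coefficient functions are polynomials of bounded weighted degree, and the whole problem collapses to a finite linear system in their coefficients.

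Next I would write out the tangency conditions. For each defining function $\rho_j=\Im w_j-\phi_j(z,\conj z,u)$ of $\mathcal{Q}_{3,2,0}$, the requirement $X\rho_j=0$ on the surface expresses $\Im g_j$ through $f$, $\conj f$ and the lower $g_k$ once $\Im w_j$ is replaced by $\phi_j$. Solving these relations from $j=2$ upward and matching the coefficient of each weighted monomial, I expect a six-dimensional solution space spanned by: the weight-$0$ grading field $2\Re\bigl(z\partial_z+2w_2\partial_{w_2}+3w_3\partial_{w_3}+4w_4\partial_{w_4}\bigr)$; the three real translations $2\Re\,\partial_{w_2}$, $2\Re\,\partial_{w_3}$, $2\Re\,\partial_{w_4}$, which merely shift $\Re w_j$ and are tangent because the equations constrain only $\Im w_j$; and the two real fields generated by a constant $f\equiv p\in\mathbb{C}$, whose tangency forces a determined cascade of correcting terms in $g_2$, $g_3$, $g_4$.

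These six fields already act transitively on the five-dimensional surface $\mathcal{C}$ --- the parameters $p\in\mathbb{C}$ and $q_2,q_3,q_4\in\mathbb{R}$ move the origin to an arbitrary point, reproducing the homogeneity established in Proposition~2 --- while the grading field fixes the origin and generates a one-dimensional isotropy, giving the expected dimension $5+1=6$. The main obstacle is the rigidity half, i.e. showing the coefficient system has no further solutions: no rotational field $2\Re(itz\partial_z+\cdots)$ (excluded because $\Im w_3=2\Re z^2\conj z$ is not rotation-invariant), no field of positive weight, and no surviving $w_2^2$-term in $g_4$. This is the infinitesimal shadow of the computation in Proposition~2, where the non-harmonic remainder $\Re\bigl(z^2\conj z(2a\conj{z^0}+3bz^0)\bigr)$ and the forced vanishing $\gamma_4=0$ already obstructed extra freedom; the same mechanism should annihilate every candidate generator outside the six listed.

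Finally I would integrate each generator into a one-parameter subgroup --- the grading field gives $z\mapsto\lambda z$, $w_j\mapsto\lambda^j w_j$; the real translations give $w_j\mapsto w_j+q_j$; and the flow of the $p$-field gives $z\mapsto z+p$ together with its induced polynomial corrections --- and compose them to obtain the stated map. The bulk of the remaining effort is the routine but lengthy bookkeeping that tracks how the monomials $z^2\conj z$, $z^3\conj z$, $z^4$ in the surface generate the induced terms $2i\lambda z\conj p$, $2i\lambda a(\conj p^{\,2}+2|p|^2)$, $4i\lambda z(6\Re p\conj p^{\,2}+\conj p^{\,3})$ and the rest; verifying that the resulting parameters $\lambda,q_2,q_3,q_4,p$ are independent then confirms $\dim\mathrm{Aut}\,\mathcal{C}=6$.
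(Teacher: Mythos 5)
The paper never actually proves this proposition: it is stated without proof (and only its identity component $\mathcal{G}$ is used afterwards), so there is no argument in the text to compare yours against. Judged on its own, your outline has the right skeleton but one genuine gap. The six generators you list do form a basis of $\mathrm{aut}\,\mathcal{C}$, the transitivity-plus-isotropy count $5+1=6$ is the right way to get the dimension, and the mechanism you invoke for rigidity (a non-pluriharmonic correction term that cannot be absorbed --- the same mechanism as in the proof of Proposition 2) is the correct one. The gap is this: integrating the Lie algebra can only ever produce the connected component of the identity --- every one-parameter subgroup you construct, and every finite composition of them, has $\lambda=e^{t}>0$ --- while the proposition asserts a parametrization of the \emph{full} group $\mathrm{Aut}\,\mathcal{C}$, which already contains a second component ($\lambda<0$) and which a priori could contain further discrete automorphisms that no infinitesimal computation can detect or exclude. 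Your closing remark that independence of the parameters confirms $\dim\mathrm{Aut}\,\mathcal{C}=6$ proves the dimension claim, but not the claim that the listed maps exhaust the group.

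The missing step is exactly the tool the paper already used in Proposition 1. Given an arbitrary automorphism of $\mathcal{C}\simeq\mathcal{Q}_{3,2,0}$, compose it with an element of your transitive family so that it fixes the origin; by the structure theorem of \cite{FiniteBlGr}, such an automorphism is triangular, $z\to\beta z$, $w_2\to\beta_2w_2$, $w_3\to\beta_3w_3$, $w_4\to\beta_4w_4+\gamma_4w_2^2$, and the computation in the proof of Proposition 1, run with $(a,b,c)=(a',b',c')=(3,2,0)$, forces $\beta\in\mathbb{R}\setminus\{0\}$, $\beta_j=\beta^j$, $\gamma_4=0$. This identifies the full isotropy of the origin as the weighted dilations with $\lambda\in\mathbb{R}\setminus\{0\}$, both signs included, and at the same stroke disposes of the rotations, the positive-weight fields and the $w_2^2$-term that you planned to eliminate by a coefficient cascade. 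Composing this isotropy with your transitive family yields precisely the stated six-parameter group, and the lengthy bookkeeping in your last paragraph reduces to verifying the explicit formulas. (A minor point worth recording: the term $2i\lambda a(\conj{p}^{2}+2|p|^{2})$ in the $w_3$-line of the statement is a typo for $2i\lambda z(\conj{p}^{2}+2|p|^{2})$, as the orbit formulas in Section 4 confirm.)
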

Обозначим $\mathcal{G}$ связную компоненту этой группы, содержащую единицу.

\section{Описание орбит действия $\mathcal G$}

\begin{definition}
	Гладкое подмногообразие $M\subset \mathbb R ^N$ назовем полуалгебраическим, если оно задается системой:
	\[M = \{x\in \mathbb R^N \mid P(x) =0,\; Q(x) > 0\},\]
	где $P(x), Q(x)$ -- полиномы.
\end{definition}

\begin{proposition}
	Орбиты действия группы $\mathcal G$ являются вещественными гладкими многообразиями размерности 5, если $(A, B, C, D) \in \mathcal{C}$, и 6 иначе. Более того, ниже будет показано, что они являются вещественными полуалгебраическими подмногообразиями и все орбиты, за исключением $\mathcal C$, имееют CR-тип $(2, 2)$.
\end{proposition}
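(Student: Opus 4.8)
Доказательство я свёл бы к вычислению ранга инфинитезимальных образующих в одной удобной точке каждой орбиты. Сначала, дифференцируя формулы Утверждения~3 по параметрам $\lambda, q_2, q_3, q_4, p$ в единице группы, выпишу шесть порождающих вещественных векторных полей: три поля сдвигов $\partial_{x_2}, \partial_{x_3}, \partial_{x_4}$ (отвечающие $q_2, q_3, q_4$; здесь $w_k = x_k + i y_k$), поле растяжения $2\Re\left(z\partial_z + 2 w_2\partial_{w_2} + 3 w_3\partial_{w_3} + 4 w_4\partial_{w_4}\right)$ и два поля, отвечающие вещественному и мнимому приращениям $p$. Размерность орбиты в точке $\xi$ равна рангу этих полей в $\xi$, так что вся задача сводится к линейной алгебре.

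Далее замечу, что у каждой орбиты есть удобный представитель. Элементом с $\lambda = 1, p = -z$ переведём точку в сечение $\{z = 0\}$, а оставшимися сдвигами $q_k$ обнулим $\Re w_k$; получится представитель $\xi_0 = (0, i s_2, i s_3, i s_4)$. В такой точке все слагаемые образующих, содержащие $z$, зануляются, а поле растяжения принимает вид $2 s_2\partial_{y_2} + 3 s_3\partial_{y_3} + 4 s_4\partial_{y_4}$. Прямой подсчёт показывает, что шесть полей в $\xi_0$ линейно независимы в точности при $(s_2, s_3, s_4)\ne 0$, то есть при $\xi_0\notin\mathcal{C}$; если же $(s_2, s_3, s_4) = 0$, то поле растяжения обращается в нуль и ранг равен $5$. Поскольку $\mathcal{G}$ действует на $\mathcal{C}$ транзитивно (сдвигом $p$ обнуляем $z$, затем сдвигами $q_k$ — оставшиеся вещественные $w_k$), орбита любой точки из $\mathcal{C}$ есть сама $\mathcal{C}$ и пятимерна, а все прочие орбиты шестимерны. Гладкость орбит вытекает из того, что орбита диффеоморфна однородному пространству $\mathcal{G}/\mathrm{Stab}(\xi)$.

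Затем вычислю CR-тип шестимерной орбиты $\mathcal{O}$ в точке $\xi_0$. Выписав $T_{\xi_0}\mathcal{O}$ как линейную оболочку образующих и его образ при умножении на $i$, найду $T^c_{\xi_0}\mathcal{O} = T_{\xi_0}\mathcal{O}\cap i\,T_{\xi_0}\mathcal{O}$. Ключевое наблюдение: определяющие линейные уравнения подпространства $T_{\xi_0}\mathcal{O}$ связывают лишь координаты $x_1, y_2, y_3, y_4$, а после умножения на $i$ возникают уравнения лишь на $y_1, x_2, x_3, x_4$; эти две системы используют непересекающиеся наборы координат и потому вместе дают четыре независимых уравнения. Отсюда $\dim_{\mathbb{R}} T^c_{\xi_0}\mathcal{O} = 4$, то есть CR-тип равен $(2, 2)$. В силу однородности тип постоянен вдоль орбиты, и то же вычисление (с отдельным разбором вырожденных случаев $s_2 = 0$ и $s_2 = s_3 = 0$) проходит для всех орбит, кроме $\mathcal{C}$.

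Наконец, для полуалгебраичности рассмотрю определяющие функции поверхности $\mathcal{C}$ (в координатах Утверждения~3): $\rho_2 = \Im w_2 - |z|^2$, $\rho_3 = \Im w_3 - 2\Re z^2\conj{z}$ и аналогичную квартику $\rho_4$, так что $\mathcal{C} = \{\rho_2 = \rho_3 = \rho_4 = 0\}$. Так как $\mathcal{G}$ сохраняет $\mathcal{C}$, функции $\rho_k$ преобразуются треугольно, $\rho_k\mapsto\lambda^k\rho_k$ плюс члены с $\rho_2,\ldots,\rho_{k-1}$, и в $\xi_0$ имеем $\rho_k(\xi_0) = s_k$. Отсюда подходящие весово-однородные рациональные комбинации от $\rho_k$ дают инварианты действия, а орбиты суть их совместные множества уровня, пересечённые с открытыми условиями, то есть множества вида $\{P = 0,\ Q > 0\}$; будучи полуалгебраическими орбитами, они локально замкнуты и потому вложены. Полуалгебраичность можно также получить из теоремы Тарского--Зайденберга, ибо $\mathcal{G}$ действует полиномиальными отображениями. Именно этот шаг я ожидаю наиболее трудоёмким: придётся аккуратно выписать закон преобразования $\rho_k$ под действием сдвига $p$ и согласовать выбор инвариантов с вырожденными стратами $\{\rho_2 = 0\}$, где наивные рациональные инварианты обращаются в бесконечность.
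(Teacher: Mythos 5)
Ваше рассуждение корректно, и его ядро совпадает с авторским: в обоих случаях размерность орбиты вычисляется как ранг шести инфинитезимальных образующих алгебры Ли группы $\mathcal G$. Отличие в реализации: в статье ранг соответствующей вещественной матрицы $6\times 8$ считается в произвольной точке с общими координатами, тогда как вы сначала приводите точку элементом группы ($\lambda = 1$, $p = -z$, затем вещественные сдвиги $q_k$) к представителю $\xi_0 = (0, i s_2, i s_3, i s_4)$, после чего линейная алгебра тривиальна и дихотомия (ранг $5$ при $(s_2, s_3, s_4) = 0$, то есть на $\mathcal C$, и ранг $6$ иначе) видна сразу; оба пути законны. Существеннее различие в остальной части: утверждения о полуалгебраичности и CR-типе $(2,2)$ автор внутри данного доказательства не устанавливает, а откладывает (<<ниже будет показано>>) до явного вывода уравнений орбит через относительные инварианты $P$, $Q$, $R$ и вычисления эрмитовых форм, вы же доказываете CR-тип непосредственно: аннуляторы $T_{\xi_0}\mathcal O$ и $i\, T_{\xi_0}\mathcal O$ живут на непересекающихся наборах координат $\{x_1, y_2, y_3, y_4\}$ и $\{y_1, x_2, x_3, x_4\}$, откуда $\dim_{\mathbb R} T^c_{\xi_0}\mathcal O = 4$. Это короче и единообразнее авторского пути (не нужен разбор случаев $P>0$, $P<0$, $P=0$), хотя и не дает классификации модельных поверхностей орбит, которую статья извлекает из своих явных формул; кстати, ваша оговорка про отдельный разбор случаев $s_2 = 0$ и $s_2 = s_3 = 0$ излишня, так как вычисление аннуляторов проходит единообразно при любом $(s_2, s_3, s_4)\ne 0$. Ваш набросок полуалгебраичности (треугольный закон преобразования $\rho_k$, весово-однородные рациональные инварианты, вырожденные страты) по существу совпадает с тем, что автор проделывает в следующем разделе, с той лишь поправкой, что относительным инвариантом является не $\rho_3 = \Im w_3 - 2\Re z^2\conj z$, а комбинация $Q = \rho_3 - 4\Re z\,\rho_2$ (и аналогично для веса $4$), что ваш треугольный закон как раз и учитывает; ссылка на теорему Тарского--Зайденберга дает полуалгебраичность орбиты как множества, но для классификации орбит явные уравнения все равно необходимы, так что этот шаг вы справедливо оцениваете как самый трудоемкий.
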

\begin{proof}
	Если $(A, B, C, D) \in \mathcal{C}$, то по построению ее орбитой будет $\mathcal{C}$. Явное вычисление ранга вещественной матрицы
	\[\left(
	\begin{matrix}
		x & y & 2 u_2 & 2 v_2 & 3 u_3 & 3 v_3 & 4 u_4 & 4 v_4\\
		0 & 0 & 1 & 0 & 0 & 0 & 0 & 0\\
		0 & 0 & 0 & 0 & 1 & 0 & 0 & 0\\
		0 & 0 & 0 & 0 & 0 & 0 & 1 & 0\\
		1 & 0 & -2y & 2x & 4u_2 - 4xy & s_1 & s_2  & s_3 \\
		0 & 1 & 2x & 2y & 2(x^2-y^2) & s_4 & s_5 & s_6\\
	\end{matrix}
	\right),
	\]
	где
	\[s_1 = 2(2 v_2 + x^2-y^2),\; s_2 = 4(3 u_3 - 3x^2 y + y^3),\; s_3 = 4(3 v_3 + x^3 - x y^2),\] \[s_4 = 4xy,\; s_5 = 4(x^3 - 3x y^2),\; s_6 = 4(3x^2 y -y^3),
	\]
	составленной из 6 векторов, пораждающих алгебру Ли группы Ли $\mathcal G$, показывает, что в случае $(A,B,C,D) \notin \mathcal C$ её ранг равен 6, а значит орбитой будет гладкое шестимерное многообразие.
\end{proof}

А теперь получим уравнения, задающие орбиты. Для этого запишем координаты образа точки $(A, B, C, D) \in \mathbb{C}^4$ под действием  группы $\mathcal{G}$:
\[\begin{cases}
	z = \lambda A + p,\\
	w_2 = \lambda^2 B + 2 i \lambda A \conj p + i |p|^2 + q_2,\\
	w_3 = \lambda^3 C + 4 B \lambda^2 \Re p + 2 i \lambda ^2 A^2 \conj p + 2 i \lambda A (\conj p ^2 + 2|p|^2) + 2i p^2\conj p + q_3,\\
	w_4 = \lambda^4 D + 12 C \lambda^3\Re p + 12 B\lambda^2\Re{(p^2 + |p|^2)} + 4i A^3\lambda^3\conj p + 6i A^2\lambda^2(\conj p ^2 + 2|p|^2) +\\ \qquad\qquad\qquad + 4iA\lambda (6\Re{p\conj p ^2 + \conj p ^3}) + 3i |p|^4 + 4i \Re p^3\conj p + q_4.
\end{cases}\]
Из первых двух уравнений получаем, что 
\[\Im w_2 - |z|^2 = \lambda^2 (\Im B - |A|^2).\]
Отсюда видно, что $P(z, w_2, w_3, w_4) = \Im w_2 - |z|^2$ -- относительный инвариант веса 2 относительно действия группы $\mathbb R_{>0}:$
\[z\to \lambda z,\quad w_2 \to \lambda^2 w_2,\quad w_3 \to \lambda^3 w_3,\quad w_4 \to \lambda^4 w_4,\quad\lambda>0\] В зависимости от знака $P$ возможны 3 ситуации.
Для случая $P(A, B, C, D)>0$, вдоль всей орбиты имеем: $P(z, w_, w_3, w_4) > 0$ и 
\[\lambda = \dfrac{\sqrt{P(z, w_, w_3, w_4)}}{\sqrt{P(A, B, C, D)}}>0,\] поскольку $\lambda\in \mathcal G$. Пусть
\begin{align*}
	Q(z, w_1, w_2, w_3) =&\; \Im w_3 - 4\Re z \Im w_2 + 2\Re z^2\conj z,\\
	R(z, w_1, w_2, w_3) =&\; \Im w_4 - (3|z|^4 + 4\Re z^3 \conj z) + 12\Re z (2\Im w_2 \Re z - \Im w_3),
\end{align*}
тогда оставшиеся уравнения задают орбиту:
\[\mathcal O_{\mu, \sigma} = \begin{cases}
	Q(z, w_2, w_3, w_4) = \mu P(z, w_2, w_3, w_4)^{\frac{3}{2}},\\
	R(z, w_2, w_3, w_4) = \sigma P(z, w_2, w_3, w_4)^2,\\
	P(z, w_2, w_3, w_4)>0,
\end{cases}\]
где 
\[\mu = \dfrac{Q(A, B, C, D)}{P(A, B, C, D)^{\frac{3}{2}}} \in \mathbb{R},\]
\[\sigma = \dfrac{R(A, B, C, D)}{P(A, B, C, D)^2} \in \mathbb{R}.\]
Отметим, что $Q, R$ -- относительные инварианты весов 3 и 4 соответственно. Поверхность $\mathcal O_{\mu, \sigma}$ голоморфно-однородна, поэтому вычисление Блум~"---Грэм типа достаточно провести в одной точке, например $(0, i, i\mu, i\sigma)$.

В этой точке эрмитовы формы имеют вид:
\[\begin{cases}
	-\dfrac{3}{2}\mu |z_1|^2 + i (z_1 \conj z_2 -  z_2\conj z_1) + \dfrac{3}{16}\mu |z_2|^2,\\
	\dfrac{9}{2}i\mu (z_1\conj z_2 - z_2\conj z_1) + \dfrac{1}{2}\sigma |z_2|^2.
\end{cases}
\]
При $\mu = \sigma = 0$ эти формы линейно зависимы и орбита не является вполне невырожденной. В этом случае модельная поверхность орбиты, которую я буду обозначать $E$, задается следующим образом:
\[
\begin{cases}
	\Im w_1 =  z_1\conj z_2 + z_2\conj z_1,\\
	\Im w_2 = 2\Re(z_1^2\conj z_2 + 2 z_1 z_2\conj z_1),\\
\end{cases}	
\]
и в точке $0$ имеет тип по Блуму~"---Грэму $((2, 1), (3, 1))$. В случае, если $\mu^2 + \sigma^2 \ne 0$ модельной поверхностью будет модельная квадрика типа $(2, 2)$. Она определяется парой линейно независимых эрмитовых форм и линейно эквивалентна одной из 3 квадрик\cite{LobodaHermitianForm}:
$$Q_{\pm 1} = \{\Im w_1 = |z_1|^2 \pm |z_2|^2,\, \Im w_2 = z_1\conj z_2 + z_2\conj z_1\},$$
$$Q_0 = \{\Im w_1 = |z_1|^2,\, \Im w_2 = z_1\conj z_2 + z_2\conj z_1\}.$$
\begin{itemize}
	\item Если $\mu = 0,\,\sigma \ne 0$, то квадрика эквивалентна $Q_0$.
	\item Если $\mu \ne 0,\, \sigma = 0$, то квадрика эквивалентна $Q_{-1}$.
	\item Если $\sigma \geqslant 27\mu^2/16>0$, то квадрика эквивалентна $Q_1$.
	\item Если $\mu\ne 0, \,\sigma^2 > 81\mu^2/2 - 24\sigma>0$, то квадрика эквивалентна $Q_1$.
	\item Если $\mu\ne0,\,\sigma^2 = 81\mu^2/2 - 24\sigma>0$, то квадрика эквивалентна $Q_0$.
	\item Если $\mu\ne 0,\, 81\mu^2/2 - 24\sigma>\sigma^2>0$, то квадрика эквивалентна $Q_{-1}$.
\end{itemize}
Пусть теперь мы имеем $P(A, B, C, D) < 0$, тогда $P(z, w_2, w_3, w_4) < 0,$
\[\lambda = \dfrac{\sqrt{-P(z, w_2, w_3, w_4)}}{\sqrt{-P(A, B, C, D)}}\] и 
\[\mathcal O ^{\nu,\sigma} := \begin{cases}
	
	Q(z, w_2, w_3, w_4) = \mu(-P(z, w_2, w_3, w_4))^{\frac{3}{2}},\\
	R(z, w_2, w_3, w_4) = \sigma P(z, w_2, w_3, w_4)^2,\\
	P(z, w_2, w_3, w_4) < 0,
\end{cases}
\]
где 
$$\nu = \frac{Q(A, B, C, D)}{(-P(A, B, C, D))^\frac{3}{2}}\in\mathbb{R}.$$
В качестве пробной точки выберем $(0, -i, i\nu, i\sigma)$, тогда пара эрмитовых форм примет вид:
\[\begin{cases}
	\dfrac{3}{2}\nu |z_1|^2 + i z_1 \conj z_2 - i z_2\conj z_1 + \dfrac{3}{16}\nu |z_2|^2,\\
	-\dfrac{9}{2}i\nu (z_1\conj z_2 - z_2\conj z_1) + \dfrac{1}{2}\sigma |z_2|^2.
\end{cases}\]
Здесь похожая ситуация: при $\nu = \sigma = 0$ -- формы линейно зависимы и модельной поверхностью является $E$.

Иначе имеем линейную независимость и если:
\begin{itemize}
	\item $\nu = 0,\,\sigma \ne 0$, то квадрика эквивалентна $Q_0$,
	\item $\nu \ne 0,\,\sigma =0 $, то квадрика эквивалентна $Q_1$,
	\item $\nu,\,\sigma\ne 0, \, \sigma\leqslant 27\nu^2/16$, то квадрика эквивалентна $Q_1$,
	\item $\nu\ne 0,\, \sigma^2>24\sigma - 81\nu^2/2>0 $, то квадрика эквивалентна $Q_1$,
	\item $\nu\ne 0,\, \sigma^2 = 24\sigma - 81\nu^2/2>0 $, то квадрика эквивалентна $Q_0$,
	\item $\nu\ne 0,\, 24\sigma - 81\nu^2/2>\sigma^2>0 $, то квадрика эквивалентна $Q_{-1}$.
\end{itemize}

Пусть теперь $P(A, B, C, D) =0$ и значит $P(z, w_2, w_3, w_4)=0$ вдоль орбиты. Из третьего уравнения:
\[\Im w_3 - 2\Re z^2 \conj z = \lambda^3(\Im C - 2\Re A^2\conj A).\]
Вновь возможны 3 случая: $\Im C > 2\Re A^2\conj A,\, \Im C < 2\Re A^2\conj A$ и $\Im C = 2\Re A^2\conj A$. При $\Im C> 2\Re A^2\conj A$ орбиту, обозначаемую $\mathcal O _\rho$, можно задать системой:
\[\begin{cases}
	P(z, w_2, w_3, w_4) = 0,\\
	Q(z, w_2, w_3, w_4) > 0,\\
	R(z, w_2, w_3, w_4) = \rho Q(z, w_2, w_3, w_4)^{\frac{4}{3}},
\end{cases}
\] где
\[\rho = \dfrac{\Im D - (3|A|^4 + 4\Re A^3\conj A) + 12 \Re A (2\Re A^2\conj A - \Im C)}{(\Im C - 2\Re A^2\conj A)^{4/3}}\in\mathbb R.\]

Выпишем соответствующие эрмитовы формы в точке $(0, 0, i, i\rho):$
\[\begin{cases}
	|z_1|^2,\\
	3i\conj z_2 z_1 - 3iz_2\conj z_1 + \dfrac{1}{9}\rho|z_2|^2.
\end{cases}\]
Cоответствующая квадрика, эквивалентная $Q_1$, будет модельной для орбиты.

Если же $\Im C - 2\Re A^2\conj A < 0$, то орбиту можно задать следующим образом:
\[\mathcal O ^\tau := 
\begin{cases}
	P(z, w_2, w_3, w_4) = 0,\\
	Q(z, w_2, w_3, w_4) < 0,\\
	R(z, w_2, w_3, w_4) = \tau  (-Q(z, w_2, w_3, w_4))^{\frac{4}{3}},
\end{cases}\]
где
\[\tau = \dfrac{\Im D - (3|A|^4 + 4\Re A^3\conj A) + 12 \Re A (2\Re A^2\conj A - \Im C)}{(2\Re A^2\conj A - \Im C)^{4/3}}\in\mathbb R.\]
Аналогичное вычисление в точке $(0, 0, -i, i\tau)$ показывает, что соответствующая \, квадрика будет вполне невырожденной модельной поверхностью.

Наконец, если $\Im C = 2\Re A^2\conj A,$ то $\Im w_3 = 2\Re z^2\conj z$ и исключая орбиту $\mathcal C$ возможны:
\[\mathcal O _{\pm} :=
\begin{cases}
	P(z, w_2, w_3, w_4) = Q(z, w_2, w_3, w_4) = 0,\\
	\pm R(z, w_2, w_3, w_4) > 0.
\end{cases}\]
$\mathcal O_\pm$ -- не являются вполне невырожденными многообразиями. Более того, они голоморфно вырождены, поскольку их алгебры автоморфизмов $\mathrm{aut}\, \mathcal O _{\pm}$ содержат векторные поля вида $2\Re f(z, w_2, w_3, w_4)\frac{\partial}{\partial w_4},$ а значит они бесконечномерны. Их модельные поверхности совпадают между собой и задаются следующей системой:
\[F=\begin{cases}
	\Im w_2 = |z|^2,\\
	\Im w_3 = 2\Re z^2\conj z,\\
	z, w_4 \in \mathbb C.
\end{cases}\]

В результате проделанных вычислений имеем
\begin{theorem}
	Все модельные поверхности орбит действия $\mathcal{G}$ классифицируются следующим образом:
	\begin{itemize}
		\item $\mathcal O_{\mu, \sigma}$:
		\begin{enumerate}
			\item при $\mu = 0,\,\sigma = 0$ модельная поверхность эквивалентна $E$				
			\item при $\mu = 0,\,\sigma \ne 0$, то модельная поверхность эквивалентна $Q_0$.
			\item при $\mu \ne 0,\, \sigma = 0$, то модельная поверхность эквивалентна $Q_{-1}$.
			\item при $\sigma \geqslant 27\mu^2/16>0$, то модельная поверхность эквивалентна $Q_1$.
			\item при $\mu\ne 0, \,\sigma^2 > 81 mu^2/2 - 24\sigma>0$, то модельная поверхность эквивалентна $Q_1$.
			\item при $\mu\ne0,\,\sigma^2 = 81 mu^2/2 - 24\sigma>0$, то модельная поверхность эквивалентна $Q_0$.
			\item при $\mu\ne 0,\, 81 mu^2/2 - 24\sigma>\sigma^2>0$, то модельная поверхность эквивалентна $Q_{-1}$.
		\end{enumerate}
		\item $\mathcal O^{\nu,\sigma}$:
		\begin{enumerate}
			\item $\nu = 0,\,\sigma = 0$ модельная поверхность эквивалентна $E$,
			\item $\nu = 0,\,\sigma \ne 0$, то модельная поверхность эквивалентна $Q_0$,
			\item $\nu \ne 0,\,\sigma =0 $, то модельная поверхность эквивалентна $Q_1$,
			\item $\nu,\,\sigma\ne 0, \, \sigma\leqslant 27 \nu^2/16$, то модельная поверхность эквивалентна $Q_1$,
			\item $\nu\ne 0,\, \sigma^2>24\sigma - 81 \nu^2/2>0 $, то модельная поверхность эквивалентна $Q_1$,
			\item $\nu\ne 0,\, \sigma^2 = 24\sigma - 81 \nu^2/2>0 $, то модельная поверхность эквивалентна $Q_0$,
			\item $\nu\ne 0,\, 24\sigma - 81 \nu^2/2>\sigma^2>0 $, то модельная поверхность эквивалентна $Q_{-1}$.
		\end{enumerate}
		\item $\mathcal O_{\rho}$ и $\mathcal O^{\rho}$: модельная поверхность эквивалентна $Q_1$,
		\item $\mathcal O_{\pm}:$ модельная поверхность эквивалентна $F$,
		\item $\mathcal C:$ является модельной поверхностью.
		
	\end{itemize}
\end{theorem}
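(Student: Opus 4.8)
План доказательства состоит в том, чтобы для каждой орбиты воспользоваться её голоморфной однородностью и вычислить модельную поверхность в одной удобно выбранной точке, сведя задачу к классификации набора эрмитовых форм Леви. Первым шагом я бы применил построенные выше относительные инварианты $P$, $Q$, $R$ (весов $2$, $3$ и $4$), чтобы разбить пространство орбит на страты. Знак инварианта $P(A, B, C, D)$ даёт первое разбиение: при $P > 0$ орбита принадлежит семейству $\mathcal O_{\mu, \sigma}$, при $P < 0$ — семейству $\mathcal O^{\nu, \sigma}$, а при $P = 0$ орбита вырождается в одну из поверхностей, определяемых уже знаком величины $\Im w_3 - 2\Re z^2\conj z$, совпадающей в этом случае с $Q$.

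Для стратов $P \ne 0$ я бы в соответствующих пробных точках ($(0, i, i\mu, i\sigma)$ при $P > 0$ и $(0, -i, i\nu, i\sigma)$ при $P < 0$) выписал пару эрмитовых форм, отвечающих квадратичной части уравнений орбиты. Если эти формы линейно зависимы — а это происходит ровно при одновременном обращении обоих параметров в нуль, — то орбита не вполне невырождена, а её модельной поверхностью служит $E$ типа $((2, 1), (3, 1))$. В противном случае пара линейно независимых эрмитовых форм на $\mathbb C^2$ по классификации из \cite{LobodaHermitianForm} эквивалентна ровно одной из квадрик $Q_{+1}$, $Q_{-1}$, $Q_0$. Ключевым инструментом здесь является сигнатурное поведение пучка $s H_1 + t H_2$: определитель $\det(s H_1 + t H_2)$ есть вещественная квадратичная форма от $(s, t)$, и тип квадрики определяется знаком её дискриминанта — положительному дискриминанту отвечает $Q_{+1}$, отрицательному — $Q_{-1}$, а нулевому — $Q_0$. Вычислив этот дискриминант как многочлен от параметров (в случае $P > 0$ он пропорционален $\mu^2(\sigma^2 + 24\sigma - 81\mu^2/2)$), я бы определил тип модельной квадрики в каждой области плоскости параметров и тем самым получил бы перечисленные в формулировке граничные множества.

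Для страта $P = 0$ разбиение продолжается по знаку $Q$. Случаи $Q > 0$ и $Q < 0$ дают орбиты $\mathcal O_\rho$ и $\mathcal O^\tau$; аналогичное вычисление пары форм в точках $(0, 0, i, i\rho)$ и $(0, 0, -i, i\tau)$ показывает, что соответствующая квадрика вполне невырождена и эквивалентна $Q_{+1}$. Наконец, при $Q = 0$, исключая саму орбиту $\mathcal C$ (пятимерную, служащую собственной модельной поверхностью), остаются орбиты $\mathcal O_\pm$; их нужно рассмотреть отдельно, так как они голоморфно вырождены: алгебры $\mathrm{aut}\, \mathcal O_\pm$ содержат поля вида $2\Re f(z, w_2, w_3, w_4)\frac{\partial}{\partial w_4}$ и потому бесконечномерны, а общей модельной поверхностью для них служит $F$.

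Основную трудность я ожидаю именно на шаге классификации пучка эрмитовых форм как функции параметров — в аккуратном выделении граничных кривых (вида $\sigma = 27\mu^2/16$ и $\sigma^2 = 81\mu^2/2 - 24\sigma$), на которых тип квадрики меняется, и в контроле вырожденных положений, где пучок приобретает кратный корень или формы становятся пропорциональными. После того как знак дискриминанта формы $\det(s H_1 + t H_2)$ вычислен во всех областях, остальные переходы сводятся к прямым, хотя и громоздким, вычислениям, а собранные воедино результаты по всем стратам дают утверждение теоремы.
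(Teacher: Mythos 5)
Ваш план по существу повторяет доказательство из самой работы: теорема там подытоживает вычисления раздела~4, и у вас та же стратификация по знаку инварианта $P$, затем по знаку $Q$, те же пробные точки, та же дихотомия (формы линейно зависимы~--- модельная поверхность $E$; иначе~--- квадрика из классификации Лободы) и та же трактовка $\mathcal O_\rho$, $\mathcal O^\tau$, голоморфно вырожденных $\mathcal O_\pm$ и самой $\mathcal C$. Ваше единственное содержательное дополнение~--- явный критерий: тип пары линейно независимых эрмитовых форм определяется знаком дискриминанта бинарной квадратичной формы $\det(sH_1+tH_2)$ (положительный~--- $Q_{1}$, отрицательный~--- $Q_{-1}$, нулевой~--- $Q_0$). Критерий верен, в работе он не выписан (дана лишь ссылка на классификацию), и для $P>0$ указанный вами дискриминант $\sim\mu^2\bigl(\sigma^2+24\sigma-81\mu^2/2\bigr)$ действительно получается из форм, приведённых в работе, и воспроизводит области из формулировки.

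Однако именно ваш критерий вскрывает пробел, который предложение обходит молчанием. Во-первых, применённый к формам, выписанным в работе для случая $P<0$, он даёт дискриминант, пропорциональный $\nu^2\bigl(\sigma^2+24\sigma+81\nu^2/2\bigr)$; его нулевое множество~--- эллипс $(\sigma+12)^2+81\nu^2/2=144$, целиком лежащий в $\{\sigma\leqslant 0\}$, тогда как в формулировке теоремы граница для $\mathcal O^{\nu,\sigma}$ задана условием $\sigma^2=24\sigma-81\nu^2/2$, лежащим в $\{\sigma>0\}$. Таким образом, ваша схема не может одновременно воспроизвести обе группы условий: в случае $P<0$ имеется расхождение знака ($\sigma\to-\sigma$), которое нужно разрешить, а не постулировать согласованность. Во-вторых, сами эрмитовы формы требуют независимого пересчёта, а не цитирования: при разложении уравнений орбиты около пробной точки слагаемое $24(\Re z)^2\,\Im w_2$ в $R$ и слагаемое $\mp2\sigma|z|^2$ из $\sigma P^2$ дают во второй форме член $(12-2\sigma)|z_1|^2$ при $P>0$ (соответственно $(2\sigma-12)|z_1|^2$ при $P<0$), отсутствующий в формах из работы. Например, для орбиты $\mathcal O_{0,1}$ (орбита точки $(0,i,0,i)$) параметризация даёт $\Im w_4=P^2+24P(\Re z)^2+3|z|^4+4\Re z^3\conj z$, откуда вторая форма равна $10|z_1|^2+\tfrac12|z_2|^2$, а не $\tfrac12|z_2|^2$; такая пара гиперболическая ($Q_1$), а не параболическая ($Q_0$), и тот же член меняет анализ вырожденного случая $\mu=\sigma=0$. Без самостоятельной проверки форм (и, как следствие, граничных кривых) ваш план не доказывает теорему в приведённой формулировке.
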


Рассмотрим теперь вопрос о попарной биголоморфной эквивалентности орбит. Заметим, что преобразование: $z \to -z,\, w_2 \to w_2,\, w_3 \to -w_3,\, w_4 \to w_4$ отображает:
\begin{align*}
	\mathcal O_{\mu,\sigma} &\to \mathcal O_{-\mu,\sigma},\\
	\mathcal O^{\nu,\sigma} &\to \mathcal O^{-\nu,\sigma},\tag{1}\label{equiv}\\
	\mathcal O_{\rho} &\to \mathcal O^{\rho}.
\end{align*}

Покажем, что кроме этих пар орбит, никакие другие вполне невырожденные не являются биголоморфно эквивалентными. Для этого нам понадобится пара лемм, аналогичные лемам 3.1 и 3.2 из \cite{BeloshapkaKossovskyOrbits}. Сформулируем их применительно к нашей ситуации.
\begin{lemma}
	Всякая вполне невырожденная орбита обладает конечномерной полиномиальной алгеброй инфинитезимальных автоморфизмов.
\end{lemma}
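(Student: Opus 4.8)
The plan is to deduce both finiteness and polynomiality from the structure of the nondegenerate model, following the scheme of Lemmas~3.1 and~3.2 of \cite{BeloshapkaKossovskyOrbits}. By the Theorem above, a completely nondegenerate orbit $M$ is a CR-manifold of type $(2,2)$ whose model surface is one of the nondegenerate quadrics $Q_{\pm1},\,Q_0$. Since $M$ is algebraic, I would first pass by an algebraic change of coordinates to variables $(z_1,z_2,w_1,w_2)$ adapted to this model, in which $M$ becomes a weighted higher-order perturbation of the quadric $Q$; polynomiality in these coordinates is equivalent to polynomiality in the original ones because the adapting map is algebraic. Assigning the quadric weights $[z_1]=[z_2]=1,\ [w_1]=[w_2]=2$, so that $\partial_{z_j}$ has weight $-1$ and $\partial_{w_i}$ weight $-2$, every $X\in\mathrm{aut}\,M$, written as $X=2\Re\bigl(f_1\partial_{z_1}+f_2\partial_{z_2}+g_1\partial_{w_1}+g_2\partial_{w_2}\bigr)$ with holomorphic coefficients, decomposes into weighted-homogeneous components $X=\sum_\nu X_\nu$. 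This equips $\mathrm{aut}\,M$ with a filtration whose associated graded embeds, as a graded vector space, into $\mathrm{aut}\,Q$, the lowest nonvanishing component $X_{\nu_0}$ of each $X$ lying in $\mathrm{aut}\,Q$.

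For finite-dimensionality I would invoke the classical description of the symmetry algebra of a nondegenerate quadric: it is finite-dimensional and graded with weights confined to $-2\le\nu\le 2$, the negative weights coming from shifts, weight $0$ from the linear isotropy, and the positive prolongation $\mathfrak g_1\oplus\mathfrak g_2$ terminating at weight $2$ precisely because the pair of Hermitian forms defining $Q$ is nondegenerate (see \cite{LobodaHermitianForm}). Together with the embedding above this yields $\dim\mathrm{aut}\,M\le\dim\mathrm{aut}\,Q<\infty$, an inequality of the type already recorded in Section~2.

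Polynomiality is the substantive part. I would solve the tangency conditions $X\Phi|_M=0$ for the defining functions $\Phi$ of $M$ weight by weight. Writing $\Phi=\Phi_Q+\Psi$, where $\Phi_Q$ is the quadric part and $\Psi$ collects the higher-weight polynomial corrections distinguishing $M$ from $Q$, and sorting $X\Phi|_M=0$ by weight, one obtains at each level a linear equation for $X_\nu$ whose homogeneous part is the model-tangency operator of $Q$ and whose inhomogeneous part is assembled from the already-determined lower components together with $\Psi$. Because $\mathfrak g_\nu(Q)=0$ for $\nu>2$, the homogeneous equation has only the trivial solution in high weight; and since $\Psi$, being polynomial, contributes source terms of bounded weight only, the components $X_\nu$ are forced to vanish for all sufficiently large $\nu$. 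Hence each coefficient $f_j,\,g_i$ is a finite sum of weighted-homogeneous polynomials, that is, polynomial.

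The main obstacle is exactly the termination of this positive-weight recursion: one must be sure that the nonquadratic corrections $\Psi$ cannot feed an infinite tower of nonzero prolongations. This is where complete nondegeneracy is indispensable and cannot be replaced by a formal degree count — for the degenerate models $E$ and $F$ the prolongation does not stop, as is already visible for the orbits $\mathcal O_\pm$, whose algebras $\mathrm{aut}\,\mathcal O_\pm$ contain the infinite-dimensional family $2\Re f(z,w_2,w_3,w_4)\tfrac{\partial}{\partial w_4}$. The argument must therefore use the vanishing $\mathfrak g_\nu(Q)=0,\ \nu>2$, essentially, which holds only under nondegeneracy of the quadric model.
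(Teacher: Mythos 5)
Your first half (finite-dimensionality) is sound and is essentially the estimate already quoted in Section~2 of the paper: the weighted filtration of $\mathrm{aut}\,M$ embeds its associated graded algebra into $\mathrm{aut}\,Q$ for the nondegenerate quadric model $Q\in\{Q_{\pm1},Q_0\}$, which is graded in weights $-2\le\nu\le2$ and finite-dimensional. The polynomiality half, however, contains a genuine gap --- in fact two. First, its premise is false: the completely nondegenerate orbits are only \emph{semialgebraic}, and in graph form they are not polynomial perturbations of the quadric. Near the point $(0,i,i\mu,i\sigma)\in\mathcal O_{\mu,\sigma}$ the defining functions contain the term $\mu(\Im Z_2-|Z_1|^2)^{3/2}$ (similarly $\mathcal O_\rho$, $\mathcal O^\tau$ involve fractional powers with exponent $4/3$ of the relative invariant), whose Taylor expansion at that point is an infinite series; thus your $\Psi$ has nonzero components in \emph{all} weights and the recursion never runs out of source terms. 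Second, even for the orbits whose equations are genuinely polynomial (e.g. $\mu=0$, $\sigma\ne0$), the logic of the recursion is incorrect: the weight-$\nu$ tangency equation has the form $L X_\nu=-\sum_{k\ge3}\bigl(\text{terms built from }X_{\nu+2-k}\text{ and }\Psi_k\bigr)$, so its right-hand side involves the already-determined \emph{lower} components of $X$, not only $\Psi$. The vanishing $\mathfrak g_\nu(Q)=0$ for $\nu>2$ therefore gives \emph{unique determination} of $X_\nu$ from $X_{<\nu}$ --- that is, jet determination, hence once more finite-dimensionality --- but it does not force $X_\nu=0$ for large $\nu$. That no such degree count can work is shown by the hypersurface $M'=\{\Im W=|Z|^2|1-W|^2+|Z|^4|1-W|^4\}\subset\mathbb C^2$, the image of $\{\Im w=|z|^2+|z|^4\}$ under $(z,w)\mapsto(z/(1-w),w)$: it is a polynomial, purely higher-weight perturbation of the nondegenerate quadric, yet $\mathrm{aut}\,M'$ contains the non-polynomial field $2\Re\bigl(\tfrac{Z}{1-W}\,\partial_Z+\partial_W\bigr)$, so your scheme, which would apply to $M'$ verbatim, cannot be valid.

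What your argument is missing --- and what the proof of Lemma 3.1 in \cite{BeloshapkaKossovskyOrbits}, to which the paper defers for this lemma, actually rests on --- is a property of the orbits beyond nondegeneracy of the model: their quasihomogeneity. Every orbit is invariant under the weighted dilations $z\to\lambda z$, $w_j\to\lambda^j w_j$, $\lambda>0$, which lie in $\mathcal G$; this is visible from the defining equations, written in terms of the relative invariants $P,Q,R$ of weights $2,3,4$. Hence $\mathrm{aut}$ of an orbit (germs extend along the connected homogeneous orbit once jet determination is known) is carried into itself by the dilation pushforwards, and a \emph{finite-dimensional} space of holomorphic vector fields invariant under all these pushforwards is necessarily spanned by weighted-homogeneous fields of finitely many weights, by a Vandermonde-type argument; a weighted-homogeneous holomorphic field is automatically polynomial. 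So polynomiality is deduced from the dilation invariance of the orbit combined with your finite-dimensionality step; the tangency recursion by itself can only ever deliver jet determination.
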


\begin{lemma}
	Любой биголоморфизм между вполне невырожденными орбитами задается бирациональным отображением $\mathbb C ^4$.
\end{lemma}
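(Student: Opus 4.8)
\emph{Набросок доказательства.} План состоит в том, чтобы свести утверждение к рациональности самого отображения $F$ и, отдельно, его обратного $F^{-1}$: если оба рациональны, то $F$ бирационально. Прежде всего отмечу, что после возведения в степень соотношений с дробными показателями (например, $Q = \mu P^{3/2}$ превращается в полиномиальное $Q^2 = \mu^2 P^3$, а $R = \sigma P^2$ уже полиномиально) каждая вполне невырожденная орбита оказывается открытым куском вещественно-алгебраического CR-многообразия. Кроме того, по утверждению о структуре $\mathcal G$ эта группа действует на орбите транзитивно бирациональными (даже полиномиальными) преобразованиями $\mathbb C^4$, что и задаёт однородность.

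Первый шаг -- алгебраичность $F$. Поскольку орбита вполне невырождена, её модельной поверхностью служит невырожденная квадрика типа $(2,2)$, а значит орбита невырождена по Леви, имеет конечный тип и голоморфно невырождена. Для вещественно-алгебраических CR-многообразий с такими свойствами любой CR-диффеоморфизм алгебраичен (алгебраичность голоморфных отображений вещественно-алгебраических многообразий: Вебстер; Баоуэнди, Эбенфельт, Ротшильд). Эквивалентная формулировка через принцип отражения: невырожденность по Леви даёт конечную струйную определённость $F$ (достаточно $2$-струи в точке), а алгебраические многообразия Сегре переводятся отображением в многообразия Сегре, откуда график $F$ лежит в комплексно-алгебраическом многообразии нужной размерности.

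Второй шаг -- переход от алгебраичности к рациональности; здесь я использую однородность вместе с предыдущей леммой. По ней $\mathrm{aut}$ каждой орбиты конечномерна и состоит из полиномиальных векторных полей ограниченной степени, а сопряжение $X \mapsto F_* X$ задаёт изоморфизм $\mathrm{aut}\, M \to \mathrm{aut}\, M'$, так что поле $F_* X$ полиномиально для каждого $X$. Транзитивность $\mathcal G$ (ранг $6$ в утверждении об орбитах) означает, что голоморфные коэффициенты $\xi_i(p)$ образующих $X_1,\dots,X_6$ в типичной точке $p$ порождают $\mathbb C^4$ над $\mathbb C$; выбрав из них невырожденную полиномиальную матрицу-репер $\Xi(p)$, из соотношений $J_F(p)\,\Xi(p) = H(F(p))$ (где столбцы $H$ -- коэффициенты полей $F_* X_i$, полиномиальные по $F$) получаю вполне интегрируемую систему $J_F = H(F)\,\Xi^{-1}$ с рациональной правой частью. Вместе с уже установленной алгебраичностью и конечной струйной определённостью это выделяет единственную ветвь алгебраического соответствия степени $1$, то есть даёт рациональность $F$. Применяя то же рассуждение к $F^{-1}$ (биголоморфизму между теми же вполне невырожденными орбитами в обратную сторону), получаю рациональность $F^{-1}$, а значит бирациональность $F$.

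Главным препятствием я считаю именно второй шаг: корректное применение теоремы об алгебраичности в коразмерности $2$ и при полуалгебраической (с дробными степенями) записи орбит, а также контроль степени, обеспечивающий переход от алгебраичности к рациональности. Ожидаю, что нужные оценки степеней и однозначность ветви извлекаются из ограниченности весов в градуировке модельной квадрики типа $(2,2)$ и из полиномиальности действия $\mathcal G$, установленной ранее.
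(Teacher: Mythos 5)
Ваш план по существу совпадает с тем, на который опирается статья: собственного доказательства в ней нет, а сказано лишь, что оно почти дословно повторяет доказательства лемм 3.1 и 3.2 из цитируемой работы Белошапки и Коссовского об орбитах модельной кубики в $\mathbb C^3$. Используемая там схема --- именно ваша: сначала алгебраичность биголоморфизма между невырожденными орбитами (теорема Вебстера; в нашей ситуации коразмерности 2 --- её обобщение Баоуэнди---Эбенфельта---Ротшильда, применимое, так как после исключения дробных степеней орбиты становятся открытыми кусками вещественно-алгебраических порождающих подмногообразий, голоморфно невырожденных и конечного типа), затем переход от алгебраичности к рациональности с использованием однородности и полиномиальности алгебры $\mathrm{aut}$ (предыдущая лемма), и симметричное рассуждение для $F^{-1}$.

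Однако во втором шаге, который вы сами называете главным препятствием, есть настоящий пробел. Импликация <<вполне интегрируемая система $J_F = H(F)\,\Xi^{-1}$ с рациональной правой частью, плюс алгебраичность, плюс конечная струйная определённость $\Rightarrow$ единственная ветвь, т.е. рациональность>> неверна как логическое утверждение: все ветви аналитического продолжения алгебраического отображения удовлетворяют той же самой рациональной системе, и каждая из них конечно струйно определена. Простейший контрпример: $f(z) = \sqrt{z}$ удовлетворяет рациональному уравнению $f' = f/(2z)$, алгебраична и однозначно восстанавливается по своей 2-струе в любой точке $z \ne 0$, но рациональной не является. Значит, ни уравнение на якобиан, ни струйная определённость сами по себе не исключают нетривиальную монодромию. Недостающий ингредиент --- именно доказательство однозначности продолжения: в цитируемой работе оно извлекается из однородности через соотношение эквивариантности $F \circ g = \sigma(g) \circ F$, где $g$ пробегает транзитивно действующую полиномиальными преобразованиями группу, а $\sigma$ --- индуцированный изоморфизм; это позволяет однозначно продолжить росток $F$ вдоль всей орбиты и далее на дополнение к собственному алгебраическому подмножеству, после чего конечноветвистое алгебраическое отображение без монодромии оказывается рациональным. Пока этот аргумент не добавлен, ваш второй шаг остаётся декларацией, а не доказательством.
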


Их доказательства почти дословно повторяют доказательства из приведенной работы. Также аналогично, можно получить
\begin{proposition}
	\begin{enumerate}
		\item Никакие пары вполне невырожденных орбит не являются биголоморфно эквивалентными, за исключением (\ref{equiv}).
		\item Группа автоморфизмов вполне невырожденных орбит совпадает с $\mathcal G$.
	\end{enumerate}
\end{proposition}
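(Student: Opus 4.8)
Обе части предложения доказываются по схеме из \cite{BeloshapkaKossovskyOrbits} с опорой на Леммы 1 и 2. Удобно начать с пункта 2, так как из него будет выведен пункт 1.

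\emph{Пункт 2.} Включение $\mathcal G\subseteq\mathrm{Aut}\,\mathcal O$ очевидно: орбита $\mathcal O$ по построению $\mathcal G$-инвариантна. Для обратного включения достаточно показать, что $\dim\mathrm{aut}\,\mathcal O=6=\dim\mathcal G$. По Лемме 1 алгебра $\mathrm{aut}\,\mathcal O$ конечномерна и полиномиальна, поэтому согласована с весовой фильтрацией градуировки $[z]=1,\,[w_2]=2,\,[w_3]=3,\,[w_4]=4$. Касательная алгебра $\mathrm{Lie}\,\mathcal G$ заполняет в точности компоненты весов $-4,\dots,0$: сдвиги $\partial_{w_2},\partial_{w_3},\partial_{w_4}$ дают веса $-2,-3,-4$, комплексный параметр $p$ — два поля веса $-1$, параметр $\lambda$ — поле веса $0$, что в сумме составляет $6$ измерений (их поточечную независимость обеспечивает проведённое выше вычисление ранга). Остаётся проверить, что положительно-весовых инфинитезимальных автоморфизмов нет. Для этого я выписываю общее поле $X=2\Re\bigl(f\partial_z+g_2\partial_{w_2}+g_3\partial_{w_3}+g_4\partial_{w_4}\bigr)$ фиксированного положительного веса с полиномиальными коэффициентами и налагаю условие касания: выражения $XP,XQ,XR$ должны обращаться в нуль на $\mathcal O$. У модельной квадрики $Q_{\pm1}$ такие поля существуют, но уравнения орбиты содержат добавочные члены старшего веса (слагаемые вида $\Re z^3\conj z$, $|z|^4$ и члены с $\Re z\,\Im w_j$), нарушающие однородность; именно они не позволяют продолжить ни один нетривиальный положительно-весовой автоморфизм квадрики до поля, касательного к $\mathcal O$. Значит $\mathrm{aut}\,\mathcal O=\mathrm{Lie}\,\mathcal G$ и $\mathrm{Aut}\,\mathcal O=\mathcal G$.

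\emph{Пункт 1.} Пусть $\Phi\colon\mathcal O\to\mathcal O'$ — биголоморфизм вполне невырожденных орбит; по Лемме 2 он задаётся бирациональным отображением $\mathbb C^4$. Так как по пункту 2 $\mathrm{Aut}\,\mathcal O=\mathrm{Aut}\,\mathcal O'=\mathcal G$, сопряжение $g\mapsto\Phi g\Phi^{-1}$ даёт $\Phi\,\mathcal G\,\Phi^{-1}=\mathcal G$, то есть $\Phi$ нормализует $\mathcal G$ в группе бирациональных преобразований. Поэтому $\Phi$ переводит $\mathcal G$-орбиты в $\mathcal G$-орбиты, а поскольку $\mathcal C$ — единственная пятимерная орбита (все прочие шестимерны), $\Phi$ сохраняет $\mathcal C$ и, значит, принадлежит $\mathrm{Aut}\,\mathcal C=\mathcal G\sqcup\iota\mathcal G$, где $\iota\colon z\to-z,\,w_2\to w_2,\,w_3\to-w_3,\,w_4\to w_4$ — растяжение с $\lambda=-1$. Элементы связной компоненты $\mathcal G$ сохраняют каждую орбиту, тогда как $\iota$ действует на относительные инварианты по правилу $P\to P,\ Q\to-Q,\ R\to R$, то есть $\mu\to-\mu$, $\nu\to-\nu$, $\sigma\to\sigma$ и $\mathcal O_\rho\to\mathcal O^\rho$. Это ровно эквивалентности (\ref{equiv}), и никаких других среди вполне невырожденных орбит нет.

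\emph{Основное препятствие.} Главную трудность представляет весовой анализ из пункта 2 — доказательство отсутствия продолжаемых положительно-весовых симметрий модельной квадрики, — а также аккуратное обоснование в пункте 1 того, что нормализатор $\mathcal G$ в группе бирациональных преобразований исчерпывается $\mathrm{Aut}\,\mathcal C$ (в частности, что нормализующее $\mathcal G$ бирациональное преобразование, сохраняющее $\mathcal C$, действительно лежит в $\mathrm{Aut}\,\mathcal C$). Оба шага технически параллельны рассуждениям из \cite{BeloshapkaKossovskyOrbits}, но требуют учёта конкретного вида относительных инвариантов $P,Q,R$ нашей ситуации.
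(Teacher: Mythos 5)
Ваша схема действительно совпадает с той, которую имеет в виду статья: сама работа доказательства не приводит, а лишь указывает, что оно получается аналогично \cite{BeloshapkaKossovskyOrbits} с опорой на леммы 1 и 2, и ваша реконструкция~--- вычисление $\mathrm{aut}\,\mathcal O$ весовым методом, затем нормализаторный аргумент, единственность пятимерной орбиты $\mathcal C$ и разложение $\mathrm{Aut}\,\mathcal C=\mathcal G\sqcup\iota\,\mathcal G$, где $\iota\colon z\to -z,\ w_2\to w_2,\ w_3\to -w_3,\ w_4\to w_4$,~--- это именно та цепочка рассуждений. Пункт 1 у вас по существу верен, с той оговоркой, что для равенства $\Phi\,\mathcal G\,\Phi^{-1}=\mathcal G$ достаточно (и нужно) ссылаться не на весь пункт 2, а лишь на совпадение связных компонент единицы: сопряжение отображением $\Phi$ задает изоморфизм топологических групп $\mathrm{Aut}\,\mathcal O\to\mathrm{Aut}\,\mathcal O'$ и потому переводит компоненту единицы в компоненту единицы, а обе они равны $\mathcal G$ по подсчету размерности.

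Настоящий пробел~--- в пункте 2. Шаг <<для обратного включения достаточно показать $\dim\mathrm{aut}\,\mathcal O=6=\dim\mathcal G$>> неверен: равенство размерностей контролирует только компоненту единицы группы $\mathrm{Aut}\,\mathcal O$ и ничего не говорит о дискретных компонентах. Более того, заключение $\mathrm{Aut}\,\mathcal O=\mathcal G$ в буквальном прочтении опровергается вашим же пунктом 1: поскольку $\iota$ переводит $\mathcal O_{\mu,\sigma}$ в $\mathcal O_{-\mu,\sigma}$ (и сохраняет знак $P$), при $\mu=0,\ \sigma\ne 0$ преобразование $\iota$ сохраняет вполне невырожденную орбиту $\mathcal O_{0,\sigma}$ (аналогично $\mathcal O^{0,\sigma}$); при этом $\iota\notin\mathcal G$ (компонента $\lambda<0$) и $\iota$ не может совпасть ни с каким элементом $\mathcal G$ даже как отображение орбиты, так как орбита~--- порождающее подмногообразие, и совпадение голоморфных отображений на ней влечет их совпадение в окрестности в $\mathbb C^4$. Следовательно $\mathrm{Aut}\,\mathcal O_{0,\sigma}\supseteq\mathcal G\sqcup\iota\,\mathcal G\ne\mathcal G$. Корректный вывод, который на самом деле дает ваша конструкция (и которого достаточно для пункта 1): всякий автоморфизм вполне невырожденной орбиты лежит в $\mathcal G\sqcup\iota\,\mathcal G$, причем $\mathrm{Aut}\,\mathcal O=\mathcal G$ ровно тогда, когда $\iota$ не сохраняет $\mathcal O$. Отмечу также, что техническое ядро пункта 2~--- проверка того, что веса $\leqslant 0$ алгебры $\mathrm{aut}\,\mathcal O$ исчерпываются $\mathrm{Lie}\,\mathcal G$, и отсутствие полей положительного веса~--- у вас лишь продекларировано (причем в рассуждении о положительных весах смешаны две разные градуировки: градуировка $[z]=1,[w_j]=j$, в которой однородны уравнения орбиты, и градуировка $[Z_i]=1,[W_j]=2$ модельной квадрики); вы честно называете это основным препятствием, но именно здесь и лежит вся вычислительная работа.
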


В теореме классификация проведена по отношению к модельным поверхностям. Встает вопрос о сферичности орбит, т.е. о биголоморфной эквивалентности орбиты своей модельной поверхности. Справедливо следующее
\begin{proposition}
	Ни одна из вполне невырожденных орбит не является сферической.
\end{proposition}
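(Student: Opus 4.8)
План состоит в том, чтобы сравнить размерность локальной алгебры инфинитезимальных голоморфных автоморфизмов орбиты с размерностью аналогичной алгебры её модельной поверхности, опираясь на уже отмеченное выше неравенство $\dim \mathrm{aut}\, M_\xi \leqslant \dim\mathrm{aut}\, Q_\xi$ и на то, что модельная квадрика максимизирует эту размерность, а равенство отвечает в точности сферическому случаю. Напомним, что сферичность вполне невырожденной орбиты $\mathcal O$ означает её локальную CR-эквивалентность своей модельной квадрике $Q$, где $Q$ — одна из $Q_{\pm 1}, Q_0$. Так как орбита голоморфно-однородна, проверку достаточно провести в одной точке. Если бы $\mathcal O$ была сферической, то росток $\mathcal O$ в некоторой точке $\xi$ был бы биголоморфно эквивалентен ростку $Q$ в соответствующей точке $\eta$; поскольку биголоморфизм индуцирует изоморфизм локальных алгебр, отсюда следовало бы $\dim \mathrm{aut}\, \mathcal O_\xi = \dim \mathrm{aut}\, Q_\eta$. Значит, достаточно усилить указанное неравенство до строгого.

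Первым шагом я вычислю левую часть. По доказанному выше Утверждению группа $\mathrm{Aut}\, \mathcal O$ всех голоморфных автоморфизмов вполне невырожденной орбиты совпадает с $\mathcal G$ и потому шестимерна. В силу Леммы о конечномерности и полиномиальности алгебра $\mathrm{aut}\, \mathcal O$ состоит из полиномиальных полей конечной размерности, и каждое её поле интегрируется в однопараметрическую подгруппу, лежащую в $\mathcal G$; следовательно, $\mathrm{aut}\, \mathcal O_\xi$ совпадает с алгеброй Ли группы $\mathcal G$, откуда $\dim \mathrm{aut}\, \mathcal O_\xi = \dim \mathcal G = 6$.

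Вторым шагом я вычислю правую часть для каждой из квадрик $Q_{\pm 1}, Q_0$. Алгебра автоморфизмов невырожденной квадрики CR-типа $(2, 2)$ наделена градуировкой $\mathfrak g = \mathfrak g_{-2}\oplus \mathfrak g_{-1}\oplus\mathfrak g_0\oplus\mathfrak g_1\oplus\mathfrak g_2$, причём $\dim \mathfrak g_{-2} = 2$ (сдвиги по $w$) и $\dim \mathfrak g_{-1} = 4$ (сдвиги по $z$), что уже даёт транзитивную шестимерную часть. Подалгебра изотропии $\mathfrak g_0\oplus\mathfrak g_1\oplus\mathfrak g_2$ нетривиальна: в $\mathfrak g_0$ лежит хотя бы поле растяжения $z\to \lambda z,\, w\to\lambda^2 w$, являющееся полем градуировки и потому касательное к любой из этих квадрик. Поэтому $\dim \mathrm{aut}\, Q \geqslant 7$ для всех трёх квадрик (точные значения $\geqslant 7$ получаются прямым решением уравнений на касательные поля). В итоге $6 = \dim \mathrm{aut}\, \mathcal O_\xi = \dim \mathrm{aut}\, Q_\eta \geqslant 7$ — противоречие, и ни одна вполне невырожденная орбита не является сферической.

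Основную трудность я ожидаю в двух местах. Во-первых, нужно аккуратно обосновать, что локальная алгебра $\mathrm{aut}\, \mathcal O$ не содержит «скрытых» ростковых симметрий сверх полей из $\mathcal G$; это опирается на полиномиальность и конечномерность из Леммы и на интегрируемость инфинитезимальных автоморфизмов в глобальные, принадлежащие $\mathcal G$. Во-вторых, нужно гарантировать строгое неравенство $\dim \mathrm{aut}\, Q > 6$ одновременно для всех трёх модельных квадрик; хотя оценки $\dim\mathfrak g_{-2}=2$, $\dim\mathfrak g_{-1}=4$ и $\dim\mathfrak g_0\geqslant 1$ за счёт растяжения уже достаточно, желательно выписать изотропную часть каждой квадрики явно, чтобы исключить возможные вырождения в случае $Q_0$ с вырожденной формой Леви.
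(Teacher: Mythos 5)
Ваше рассуждение корректно, но идёт по принципиально иному пути, нежели доказательство в статье. В статье несферичность устанавливается прямым вычислением: сферичность орбиты $\mathcal O_{\mu,\sigma}$ означала бы существование нормализованного биголоморфного отображения (тождественного в младших весах), переводящего росток орбиты в её модельную квадрику, и разложение определяющей системы по бистепеням $(m,n)$ относительно $Z,\conj Z$ показывает, что компонента бистепени $(2,1)$ не уничтожается никаким таким отображением; это самодостаточный аргумент, дающий явное препятствие уже на уровне формальных рядов. Вы вместо этого считаете размерности: из леммы о конечномерности и полиномиальности, леммы о бирациональности и утверждения о том, что группа автоморфизмов вполне невырожденной орбиты совпадает с $\mathcal G$, выводится $\dim \mathrm{aut}\,\mathcal O_\xi = 6$, а для квадрик $Q_{\pm1}, Q_0$ даётся элементарная оценка $\dim \mathrm{aut}\, Q_\eta \geqslant 7$ (шестимерная транзитивная часть из сдвигов плюс поле градуировки $z\to\lambda z$, $w\to\lambda^2 w$, касательное к любой квадрике); противоречие $6\geqslant 7$ завершает доказательство. Оба шага верны, причём нижняя оценка для квадрик не требует знания точных размерностей, а сам аргумент охватывает все вполне невырожденные орбиты единообразно, тогда как в статье вычисление проведено для $\mathcal O_{\mu,\sigma}$ с ремаркой об аналогичности остальных случаев. Плата за краткость в том, что ваш путь целиком опирается на утверждение $\mathrm{Aut}\,\mathcal O = \mathcal G$, доказательство которого в статье не приведено, а лишь отослано к аналогии с \cite{BeloshapkaKossovskyOrbits}; следует явно убедиться, что это утверждение, в свою очередь, не использует несферичность, иначе возникла бы циркулярность (в цитируемой работе соответствующий факт получается прямым вычислением полиномиальной алгебры инфинитезимальных автоморфизмов, так что порядок корректен, но эту зависимость стоит проговорить). Отмечу также, что ваша фраза о том, будто равенство размерностей отвечает <<в точности>> сферическому случаю, не нужна и в общем виде неочевидна: в доказательстве используется лишь тривиальная импликация <<сферичность влечёт изоморфизм локальных алгебр>>, и её достаточно.
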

\begin{proof}
	Рассмотрим орбиту $\mathcal O _{\mu. \sigma}$, при $\mu,\sigma \ne 0$. Для удобства переобозначим переменные: $Z_1:= z,\, Z_2 := w_2,\, W_1 := w_3,\, W_2 := w_4$. Введем градуировку:
	\[[Z_1] = [\conj Z_1] = [Z_2] = [\conj Z_2] = 1,\, [W_1] = [W_2] = 2.\]
	Пусть в окрестности какой-нибудь точки $\xi \in \mathcal O_{\mu, \sigma}$ система имеет вид:
	\[\begin{cases}
		\Im W_1 = H_1(Z_1, Z_2, \conj Z_1, \conj Z_2) + \ldots,\\
		\Im W_2 = H_2(Z_1, Z_2, \conj Z_1, \conj Z_2) + \ldots,\\
	\end{cases}\]
	где $H_j$ -- эрмитовы формы, а многоточие означает члены больших весов. $\mathcal O_{\mu, \sigma}$ является сферической тогда и только тогда, когда существует биголоморфное отображение вида:
	\[
	\begin{cases}
		F_1(Z_1, Z_2, W_1, W_2) = Z_1 + \sum\limits_{n=2}^\infty f^{(1)}_n (Z_1, Z_2, W_1, W_2),\\
		F_2(Z_1, Z_2, W_1, W_2) = Z_2 + \sum\limits_{n=2}^\infty f^{(2)}_n (Z_1, Z_2, W_1, W_2),\tag{2}\label{map}\\
		G_1(Z_1, Z_2, W_1, W_2) = W_1 + \sum\limits_{n=3}^\infty g^{(1)}_n (Z_1, Z_2, W_1, W_2),\\
		G_2(Z_1, Z_2, W_1, W_2) = W_2 + \sum\limits_{n=3}^\infty g^{(2)}_n (Z_1, Z_2, W_1, W_2),\\
	\end{cases}
	\]
	где $f^{(j)}_n, g^{(j)}_n,\, j=1,2$ -- однородные компоненты веса $n$, для которого справедлива система:
	\[
	\begin{cases}
		\Im G_1(Z, W) = H_1(F_1(Z, W), F_2(Z, W), \overline{F_1(Z, W)}, \overline{F_2(Z, W)}),\tag{3}\label{system}\\
		\Im G_2(Z, W) = H_2(F_1(Z, W), F_2(Z, W), \overline{F_1(Z, W)}, \overline{F_2(Z, W)}),
	\end{cases}\]
	при $Z = (Z_1, Z_2),\, W = (W_1, W_2),$
	\begin{align*}
		W_1 = U_1 + i (4\Re Z_1 \Im Z_2 - 2\Re {Z_1^2\conj Z_1} + \mu (\Im Z_2 - |Z_1|^2)^{3/2}),\\
		W_2 = U_2 + i (3|Z_1|^4 + 4\Re Z_1^3\conj Z_1 + 24(\Im Z_2 - |Z_1|^2)(\Re Z_1)^2 + \\\qquad\qquad + 12\mu\Re Z_1 (\Im Z_2 - |Z_1|^2)^{3/2} + \sigma(\Im Z_2 - |Z_1|^2)^2).
	\end{align*}
	Разложим левую и правую части системы (\ref{system}) на компоненты $K_{(m, n)}$ бистепени $(m, n)$ по $Z, \conj Z$. Вычисления показывают, что для любого отображения вида (\ref{map}) такого, что $K_{(1,0)}=0$, компонента $K_{(2, 1)}\ne 0$. Поэтому $\mathcal O_{\mu, \sigma}$ -- не является сферической. Для остальных вполне невырожденных орбит рассуждение аналогично.
\end{proof}
Автор благодарит своего научного руководителя В.К. Белошапку за внимание к работе и ценные замечания.

\end{document}